\newtheorem{thm}{Theorem}[section]
\newtheorem{lem}{Lemma}
\newtheorem{defn}{Definition}[section]
\newtheorem{remark}{Remark}
\newcommand{\norm}[1]{\Vert#1\Vert}
\newcommand{\abs}[1]{\vert#1\vert}
\newcommand{\Abs}[1]{\left\vert#1\right\vert}
\newcommand{\bq}{\begin{equation}}
\newcommand{\eq}{\end{equation}}
\newcommand{\R}{\mathbb{R}}
\newcommand{\e}{\epsilon}
\newcommand{\bO}{\mathcal{O}}
\newcommand{\dm}{\,d}
\newcommand{\al}{\alpha}
\newcommand{\FL}{(-\Delta)^{\al/2}}
\newcommand{\FLh}{(-\Delta_h)^{\al/2}}
\newcommand{\hhat}{T_h}
\newcommand{\Qe}{Q_h}
\newcommand{\Qo}{R_h}
\newcommand{\Dd}{D}
\newcommand{\Ddh}{\Dd_h}
\newcommand{\DdCh}{\Dd^\mathbf{C}_h}
\newcommand{\Cal}{{C_{1,\al}}}
\newcommand{\meas}{\nu}
\title{Numerical methods for the fractional {L}aplacian: \\
  a finite difference-quadrature approach}
\author{Yanghong Huang\thanks{Department of Mathematics
Imperial College London, London SW7 2AZ, United Kingdom. Email: yanghong.huang@imperial.ac.uk} 
\and Adam Oberman\thanks{Department of Mathematics and Statistics, McGill University, Montreal, Canada. Email: adam.oberman@mcgill.ca}}
\date{\today}
\begin{document}

\maketitle

\begin{abstract}
The fractional Laplacian $(-\Delta)^{\alpha/2}$ is a non-local operator which depends on the parameter $\al$ and recovers the usual Laplacian as $\al \to 2$.  
A numerical method for the fractional Laplacian is proposed, based on the singular integral representation for the operator.
The method combines finite differences with numerical quadrature, to obtain a discrete convolution operator with positive weights.
The accuracy of the method is shown to be $O(h^{3-\al})$.  Convergence of the method is proven.   
The treatment of  far field boundary conditions using an asymptotic approximation to the integral is used to obtain 
an accurate method.
Numerical experiments on known exact solutions validate the predicted convergence rates.  Computational examples include exponentially and algebraically decaying solutions with varying regularity.  The generalization to nonlinear equations involving the operator is discussed: the obstacle problem for the fractional Laplacian is computed.

\end{abstract}

\begin{keywords} 
fractional Laplacian, Partial Differential Equations, numerical method, finite difference method
\end{keywords}

\begin{AMS}
26A33, 65M06,  41A55
\end{AMS}

\pagestyle{myheadings}
\thispagestyle{plain}
\markboth{Y. Huang and A. Oberman}{Numerical methods for the fractional {L}aplacian}

\section{Introduction}

The fractional Laplacian is the prototypical operator to model non-local diffusions.  These non-local (or anomalous) diffusions, which incorporate long range interactions, have been the subject of much interest in recent years ~\cite{MR0361633,MR1890104,MR2807926,herrmann2011fractional}.
Despite the diversity of the numerical methods in use, the modern tools of numerical analysis have not been applied to the study of the operator.  In particular, the accuracy of the methods may be unknown, and convergence proofs have not been firmly established, for the majority of the relevant numerical methods.  

In this paper, we derive a finite difference/quadrature  method for the \emph{fractional Laplacian}.  We obtain the accuracy of the method (at least on smooth solutions) and prove the convergence of the method, in the setting of the extended Dirichlet problem.   We also include supplemental materials (excerpted from~\cite{HuangOberman2}, in preparation) where we perform a comparative numerical study of this methods with existing methods.   We make an additional effort to study the effect of truncation of the domain on the accuracy of the solution, and derive an asymptotic approximation which can significantly reduce the error from truncation.

On the entire space $\R^n$, the fractional Laplacian $(-\Delta)^{\alpha/2}$ with $0 < \alpha <2$ can be defined in many equivalent ways.
 It is defined simply as a pseudo-differential operator with symbol $|\xi|^\alpha$~\cite{MR0290095}, that is, via the Fourier transform $\mathcal{F}$,
\bq\label{eq:fftfrac}
\mathcal{F}\big[\FL u\big](\xi) = |\xi|^{\al}\mathcal{F}[u](\xi).
\eq
The definition~\eqref{eq:fftfrac} provides a simple method for solving $\FL u = f$  (assuming  $f$ decays quickly enough at infinity).  
An equivalent definition of the operator is given by  a singular integral~\cite{MR0350027},
\bq\label{eq:rieszfrac}
 \FL u(x) = C_{n,\al} \, \int_{\mathbb{R}^n}
\frac{u(x)-u(y)}{|x-y|^{n+\al}} dy.
\eq
Here the constant $C_{n,\alpha}$ is given by 
 \begin{equation}\label{eq:constC}
 C_{n,\al} = \frac{\al 2^{\al-1}\Gamma\left(\frac{\al+n}{2}\right)}
{\pi^{n/2}\Gamma\left(\frac{2-\al}{2}\right)},
\end{equation}
and $\Gamma(x)$ is the Gamma function. In probability theory, the fractional
Laplacian is the infinitesimal generator of symmetric $\alpha$-stable L\'{e}vy
process~\cite{MR2512800}. In financial mathematics, it appears as an alternative model to Brownian motion, incorporating the jumps in asset prices~\cite{cont2004financial,raible2000levy}. These representations are shown to be  equivalent in~\cite{MR0350027,MR0290095,Vald11}.

On a bounded domain, the study of the operator becomes more complicated. 
In contrast to the case of the (standard) Laplacian operator,  where Dirichlet and Neumann boundary conditions are well understood, and have simple interpretations at the particle or probabilistic level, physical or probabilistically motivated interpretations of the fractional Laplacian operator on bounded domains are not well established \cite{chechkin2003first,FLBD}. 
The different representations of the fractional Laplacian may lead to different operators when restricted to a bounded domain, and this provided challenges for numerical methods, which naturally require truncation of the operator to a bounded domain. 
This simplest case is for periodic boundary 
conditions, where  the operator can still be defined via~\eqref{eq:fftfrac}. On an interval $[-L,L]$, the fractional Laplacian can be defined in terms of the left and right Riemann-Liouville fractional derivatives ${}_{-L}D_x^\alpha u(x)$ and ${}_xD_L^\al u(x)$~\cite{MR1347689,gorenflo1998random}, 
\begin{equation}\label{eq:fracdevlap}
(-\Delta)^{\alpha/2}u(x) = 
\frac{{}_{-L}D_x^\alpha u(x)+{}_xD_L^\al u(x)}{2\cos (\alpha
\pi/2)}, \quad \alpha \neq 1.
\end{equation}
The  spectral decomposition gives another way to define the fractional Laplacian  on a bounded domain $\Dd \subset \R^n$.
Let $(\lambda_k, \phi_k)_k$ be the eigenpairs of the (negative) Laplacian operator $-\Delta$,
\begin{equation*}\label{eq:eigexpan}
-\Delta \phi_k = \lambda_k \phi_k,
\end{equation*}
subject to appropriate boundary conditions which ensure that all the  $\lambda_k$ 
are nonnegative and and that $\{\phi_k\}$ is a complete orthonormal basis. 
Then if $u$ has the expansion $u(x) = \sum_{k} c_k \phi_k(x)$, we can define
\begin{equation*}\label{eq:spectraldecom}
(-\Delta)^{\alpha/2} u(x) = \sum_{k} c_k\lambda_k^{\alpha/2}\phi_k(x).
\end{equation*}
However, it is not clear how to interpret the spectral definition at the particle level or, more rigorously, in terms of the underlying L\'{e}vy process.  On the other hand, most of these definitions formally converge to the fractional Laplacian operator in $\mathbb{R}^n$ as the domain is extended to the whole space. 

While the appropriate treatment of different boundary conditions is still an open problem,  a natural  starting point  is the extended Dirichlet boundary value problem, given by
\bq\label{FLD}\tag{FLD}
\begin{aligned}
\FL u &= f, && \text{ for } x \in  \Dd
\\
u &= g,  && \text{ for  } x \in \R^n\!\setminus\! \Dd.
\end{aligned}
\eq
Here the functions $f$ and $g$ are given, with appropriate smoothness and decaying conditions. 
Note that now~\eqref{eq:fftfrac} cannot be applied directly, since $f$ is defined only on the bounded domain $D$.

Using the singular integral definition~\eqref{eq:rieszfrac}, the operator splits into
\[ 
\FL u(x) = C_{n,\al} \left( 
\int_{\Dd}  \frac{u(x)-u(y)}{|x-y|^{n+\al}} dy + 
\int_{\mathbb{R}^n\setminus\Dd}  \frac{u(x)-g(y)}{|x-y|^{n+\al}} dy 
\right), \quad x\in \Dd
\]
where the representation   now makes it clear that the unknowns are only in $\Dd$, despite the fact that we have an integral over $\R^n$.  In contradistinction to the Dirichlet problem for the standard Laplacian,
the values of $g$ 
are  required on the entire complement of the domain $\Dd$, rather than only on 
 boundary $\partial \Dd$.   
Solving the extended Dirichlet problem~\eqref{FLD} is a building block for the treatment of more general nonlinear problems, for example, the obstacle problem~\cite{silvestre2007regularity}, which we touch upon below.

In the special case $g=0$, $f=1$, the solution, $u(x)$, of \eqref{FLD} is related to the probability density function of the first exit time of the symmetric  $\alpha$-stable L\'{e}vy process from a 
given domain $\Dd$~\cite{MR0137148}. See also \cite{BurchLehoucq} where related problems with volume constraints are discussed.
In the case where $\Dd$ is a ball about the origin,  there is 
an explicit expression (``balayage problem'') for the solution, which is an integral involving $f$ and $g$ (\cite[Chapter I]{MR0350027} or~\cite[Section 5.1]{silvestre2007regularity}).  The balayage integral generalizes the classical Poisson formula for the Laplace operator.

More general Parabolic Integro-Differential Equations (PIDE) driven by L\'evy processes appear in mathematical finance~\cite{cont2005finite}.  
The  infinitesimal generators $L^X$ of  L\'evy processes are of the form 
\[
L^{X}u(x) = \frac{\sigma^2}{2}u_{xx}(x) + \gamma u_x(x) + I[u](x)
\]
where the nonlocal operator reads 
\begin{equation}\label{eq:PIDEnonlocal}
I[u](x) = \int_{\mathbb{R}} \left(\vphantom{e^{\frac{3}{2}}} u(x+y) - u(x) - y\chi_{\abs{y}\le 1}(y) u_x(x) \right)   \,  \meas(y)\,dy. 
\end{equation}
The fractional Laplacian operator is recovered when $\meas(y) = C_{1,\alpha}\abs{y}^{-1-\al}$.  Part of our derivation applies to a general L\'{e}vy measure $\meas(y)\,dy$, although our focus is on the 
special case of the fractional Laplacian.  

Many numerical methods have been proposed to solve equations involving the fractional Laplacian operator, either on the whole space or bounded domains. A majority of them are related to fractional derivatives (either Riemann-Liouville or Caputo type) on bounded domains, termed  \emph{fractional diffusion}, as summarized in~\cite{diethelm2005algorithms}. 
The theoretical aspects of equations involving fractional derivatives are not clearly established.
In contrast, during the last decade equations involving the fractional Laplacian on
the whole space have been studied intensively: for instance the fractional Burgers 
equation~\cite{MR1637513,MR2227237} and fractional porous medium equations~\cite{MR2737788,MR2817383,MR2847534}.

After we submitted this work, we became aware of recent work on nonlocal diffusion problems and applications in the context of peridynamics.  We refer to the  review article \cite{du2012analysis}.   In the reference  \cite{tian2013analysis} a finite difference/quadrature method  is used to approximate a truncated nonlocal diffusion operator.  Convergence of the approximation is also proved, using similar maximum principle techniques.    In \cite{d2013fractional}, a study of convergence of approximations of the fractional Laplacian operator is performed, paying particular attention to the influence of the truncation size of the operator and the computational domain size.

Compared with the advances in theoretical analysis, there are not many state-of-the-art numerical methods designed for equations involving the fractional Laplacian. Surprisingly, the nonlinear theory is ahead of the linear theory, in the sense that once a suitable numerical method is obtained for the fractional Laplacian, it can be extended to nonlinear problems.  
For example, in the case of nonlinear elliptic operators involving the fractional Laplacian, a necessary requirement for convergence to the unique viscosity solution of the equation is that the scheme is consistent and monotone~\cite{biswas2010difference}~\cite{alvarez1996viscosity}.   In this context, for a linear equation,  monotonicity corresponds to positive weights.

Similarly, for the fractal conservation laws considered by Droniou~\cite{MR2552219},  a suitable scheme for the  fractional Laplacian allows a nonlinear  numerical method to be built which converges to the  corresponding entropy solution. 
In a similar spirit, Cifani, Jakobsen and their colleagues~\cite{MR2795714,MR2832791,MR2855429} provide results for other equations with convection or degenerate diffusion. However, the focus of these schemes is more on the right limiting entropy solutions, while the accuracy and the fast implementation of the schemes may not be essential. 
In financial mathematics, a discussion of the related numerical methods can be found in~\cite[Chapter 12]{cont2004financial}.   Numerical methods for those parabolic obstacle Integro-Differential Equations have been studied in~\cite{cont2005finite},  including a proof of convergence to the viscosity solution. Our methods are obtained using similar methods to~\cite{
cont2005finite}, but with improvements in the order of accuracy and the treatment of boundary conditions.


In theory, the fractional Laplacian operator can be approximated numerically by any of the different yet equivalent definitions above, \eqref{eq:fftfrac}, \eqref{eq:rieszfrac} or \eqref{eq:fracdevlap}. The spectral methods based on~\eqref{eq:fftfrac}, usually implemented with FFT, are effective for periodic domains. However in the whole space,  the slow decay of the solutions requires a large number of modes or introduces significant aliasing errors. Schemes based on the
integral representation must be designed carefully to avoid large errors near the singularity and to take into account the contribution on the unbounded space. The popular Gr\"unwald-Letnikov type difference methods based on~\eqref{eq:fracdevlap}  become singular when $\alpha\approx 1$~\cite{Tadjeran2006205}, already observed from the denominator in their definition.  In addition, there is no isotropic extension of the fractional derivatives to higher dimensions. 
An alternative to directly representing the operator, is to use an extension problem~\cite{caffarelli2007extension}, which results in a local problem on a higher dimensional space.  Numerical schemes using the extension problem have been built,  based on finite difference based methods~\cite{Felix,JLFelix} and finite element methods~\cite{nochetto2013pde}.

In this paper, we derive a finite difference/quadrature discretization of the fractional Laplacian based on the singular integral definition~\eqref{eq:rieszfrac} in one dimension.  The operator is  approximated by a discrete convolution of the function grids values $u_i$ with positive weights $w_j$, defined for $j = 1, \dots,  \infty$.  We write the discrete operator as 
\bq
\label{FLh}\tag{$\text{FL}_h$}
    (-\Delta_h)^{\alpha/2} u_i 
    = \sum_{j=1}^\infty     (2u_i-u_{i+j}- u_{i-j})w_j
    =  \sum_{j=-\infty}^\infty  (u_i-u_{i-j})w_j.
\eq
In the second expression, we have extended the weights symmetrically, $w_{-j} = w_j$  (there is no need to define $w_0$, since this term makes no contribution to the sum).
The positivity and decay of the kernel of the fraction Laplacian operator are inherited by the discrete fractional Laplacian operator. 
The discrete operator can be computed efficiently using a fast convolution algorithm.

The discretization is derived in Section~\ref{sec:weights}, with special attention to the singularity in the integral.  The explicit weights $w_j$ 
are given in Section~\ref{sect:explicitweights}. 
The convergence proof of the extended  Dirichlet problem is given in Section~\ref{sec:proof}. The treatment of the far field boundary conditions is performed in Section~\ref{sec:farfieldBC}.
Validation of the accuracy and  numerical convergence tests are performed in 
Section~\ref{sec:numerics}.

\section{ Quadrature and finite difference discretization of the fractional Laplacian} \label{sec:weights}

In this section we derive the combined finite difference/exact quadrature  discretization of the fractional Laplacian operator in one dimension. 
The weights, $w_j$, in the convolution~\eqref{FLh} are collected from 
approximations of the integral~\eqref{eq:rieszfrac}, by splitting it into two parts.  In the singular part of the integral, we obtain a (rescaled) second derivative, it is discretized using a standard centered finite difference.  In the tail of the integral, because a direct quadrature method would lead to large errors, we  perform a non-standard quadrature.  This semi-exact quadrature uses exact integration of the weight function $\nu(y)$, 
multiplied by an interpolation of the unknown function $u$.

\subsection{The singular integral operator}

We consider the slightly more general singular integral operator
\bq\label{eq:SI}\tag{SI}
  I^\meas[u](x) = \mathrm{P.V.}
    \int_{\R}  \big(u(x)-u(x-y)\big) \meas(y)\, dy,
\eq
where $\mathrm{P.V.}$ indicates a principal value integral.
The nonnegative measure $\meas(y) \,dy$ satisfies the conditions: 
\begin{equation}\label{eq:levymeas}
\int_{-1}^1 y^2\nu(y)dy<\infty, \quad \int_{|y|>1}\nu(y)dy <\infty.
\end{equation}
In the special case 
\[
\meas(y) = \meas^\al(y) \equiv \Cal\abs{y}^{-1-\al},
\]
 the definition~\eqref{eq:SI} recovers the singular integral representation~\eqref{eq:rieszfrac} of
the fractional Laplacian.

We immediately divide the integral~\eqref{eq:SI} into two parts, the singular part, and the tail.
\begin{defn}
Take $h < 1$ and  define the singular part, and the tail, of the integral to be, respectively
\begin{align}
\label{IsingDefn}
I^\meas_{S}[u](x) = \mathrm{(P.V.)} & \int_{\abs{y} \le h}
  \big(u(x)-u(x-y)\big) \meas(y) \dm y,
\\
\label{Tail}
 I^\meas_{T}[u](x) = &\int_{\abs{y} > h}
  \big(u(x)-u(x-y)\big) \meas(y) \dm y.
\end{align}
\end{defn}

Next we will specialize to functions on the grid, $\mathbb{Z}_h  = \{ x_i= ih \mid i= 0,\pm 1, \pm 2, \dots  \}$,
with uniform spacing, $h$, and derive an expression for the discrete
fractional Laplacian at $x  = x_i$.

\subsection{Approximation of the singular part of the integral}
In this  subsection, we deal with the singular integral \eqref{IsingDefn},
whose approximation is shown to be a rescaled second derivative at $x$.  

First, assuming that  $\meas$ is even, and satisfies \eqref{eq:levymeas}, and that $u$ has a bounded second derivative,  we can symmetrize~\eqref{IsingDefn} to obtain,
\bq\label{si2}
I^\meas_{S}[u](x) = \int_0^{h}
  \big(2u(x)-u(x+ y) - u(x-y)\big) \meas(y) \dm y.
\eq
which is no longer  principal value integral.

Assuming that $u \in C^4$, substitute the Taylor expansion with exact remainder
\[
u(x \pm y) = u(x) \pm u'(x) y + \frac {y^2}{2} u''(x) \pm
\frac{y^3}{6}u'''(x) +   \frac{y^4}{24} u''''\big(\xi(y)\big)
\]
into~\eqref{si2}. 
The odd terms cancel, giving 
\begin{align}
I^\meas_{S}[u](x) &=  -{u''(x)}\int_0^{h}\
 y^2 \meas(y)\dm y -  \frac{1}{12}\int_0^{h}u^{(4)}\big(\xi(y)\big)  y^4  \meas(y) \dm y \cr
&=  -{u''(x)} \int_0^{h} y^2 \meas(y) \dm y - \frac{u^{(4)}(\bar{\xi})}{12}  \int_0^{h} y^4  \meas(y) \dm y,
\end{align}
for some  $\bar{\xi}$ in the interval $(x-h,x+h)$. 
When $\meas(y) = \meas^\al(y)$ we obtain
\begin{equation}\label{eq:FLsi}
I^{\nu^\al}_{S}[u](x) = -\Cal \left ( 
\frac{h^{2-\alpha}}{2-\alpha} u''(x) +
    \frac{u''''(\bar{\xi})}{12}\frac{h^{4-\alpha}}{4-\alpha}
    \right).
\end{equation}
Therefore, the approximation above replaces the singular integral~\eqref{IsingDefn} by a second derivative of $u(x)$. For a fully discrete approximation, we next replace 
 $u''(x)$ by central differences,
\[
u''(x) = \frac{u(x+h) - 2u(x) + u(x-h) }{h^2} +  \frac{u''''(\tilde{\xi})}{12} h^2
\]     
to obtain
\bq\label{SingIntApprox1}
I^{\nu^\al}_{S}[u](x) = -\Cal \frac{u(x+h) - 2u(x) + u(x-h) }{(2-\alpha)h^{\alpha}} 
- 
 \frac{M_4}{12}\frac{h^{4-\alpha}}{2-\alpha},
\eq
where $M_4$ is the absolute value of a linear combination of  fourth derivatives  of $u$.
This last equation leads to the fully discrete approximation of~\eqref{IsingDefn} at $x_i$, 
\bq\label{SingIntWeights}
I^{\nu^\al}_{S}[u](x_i) \approx -\Cal \frac{u_{i+1} - 2u_i + u_{i-1} }{(2-\alpha)h^{\alpha}}
=\frac{\Cal h^{-\alpha}}{2-\alpha}(u_i-u_{i+1})+
\frac{\Cal h^{-\alpha}}{2-\alpha}(u_i-u_{i-1}).
\eq

\subsection{The tail of the integral} 
\label{sec:interp}
Next we focus on the tail of the integral~\eqref{Tail}.  A natural idea is to use a simple quadrature rule to approximate it, for example the trapezoidal rule
for the integrand $\big( u(x)-u(x-y)\big) \nu(y)$.    However, the errors from the trapezoidal rule, which depend on a derivative of the integrand,  blow up as $h \to 0$.   Even for finite $h$, these errors are  too large to be used in practice. Instead, we take some extra effort  to integrate \emph{exactly} the weight function $\meas(y)$ multiplied by a polynomial interpolant of $v(y) := u(x_i)-u(x_i-y)$.  

In the tail region $|y|\geq h$,  we approximate the regular part of the integrand $v(y) = u(x_i)-u(x_i-y)$ by the interpolant,
\begin{equation}\label{eq:numinterp}
    \mathcal{P}v(y) = \sum_{j \in \mathbb{Z}} v(x_j) P_{j}(y-x_j)
=\sum_{j \in \mathbb{Z}} (u_i-u_{i-j}) P_{j}(y-x_j), 
\end{equation}
for some basis functions, $P_j$, defined so that $P_j(0) = 1$ and $P_j(x_k)=0$ for 
$k\neq 0$. The basis functions $P_j$ are  Lagrange basis polynomials extended to finite overlapping domains.  Specific examples, such as the first order tent function, are defined in subsection~\ref{sec:tent}. 

Substituting the interpolant~\eqref{eq:numinterp} into~\eqref{Tail}, we obtain 
\begin{equation}\label{eq:approxint}
  I^\meas_{T}[u](x_i) \approx 
\int_{|y|\geq h}     \mathcal{P}v(y)\meas(y)dy
=
   \sum_{j \neq 0}  (u_i-u_{i-j})
    \int_{|y|\geq h}
    {P_j(y-x_j)} \meas(y) dy
\end{equation}
In section~\ref{sect:explicitweights} we evaluate the last integral above \emph{exactly}, since both the weight function $\nu(y)$ and the basis polynomials $P_j$ are known, avoiding the additional error from a direct numerical quadrature involving $\meas(y)$.

The approximation~\eqref{eq:approxint} defines, for fixed $x_i$,  the weight 
$\tilde{w}_{i,j}  = \int_{|y|\geq h} P_j(y-x_j) \meas(y) dy$.
In principle, we could use a different interpolation at each $x_i$.  However, as already indicated by the notation, 
the basis function  $P_j(y)$ is assumed independent of $i$, which means the weight $\tilde{w}_{i,j}$ is also independent of $i$. 
Since we also assume $P_j(y)$ is symmetric, this leads to a choice of weights in~\eqref{FLh}, given by 
\begin{equation}\label{eq:weightsform}
w_j = w_{-j}
=\int_{|y|\geq h} P_j(y-x_j) \meas(y) \dm y.
\end{equation}

\begin{remark}  We must be careful evaluating~\eqref{eq:weightsform} on the boundary of intervals, both near the singular interval ($|y|\approx h$ or $|j|=1$), and near $|y| \approx L$ when the computational domain is truncated to  $[-L,L]$, in order to avoid double counting of the weights.
\end{remark}

\begin{remark} We have to isolate $I^\meas_S[u]$ from~\eqref{eq:SI}, because of the singularity of $\nu(y)$ at the origin.
If~\eqref{eq:weightsform} is integrated on $\mathbb{R}$ instead of
the interval $(-\infty,-h)\cup (h,\infty)$, 
then $w_{\pm 1}$ may diverge for most functions $P_{\pm 1}$ with 
$\alpha \in (1,2)$. The splitting of $I^\meas_S[u]$ and 
$I^\meas_T[u]$ avoids this problem at the singularity.
\end{remark}

Using the formula~\eqref{eq:weightsform}, the weights are obtained 
when $P_j(y)$ is replaced by piecewise polynomials.  
These functions are local Lagrange basis polynomials on each interval.
 The first order basis function are piecewise linear ``tent'' functions;  the second order polynomials comprise two families of piecewise quadratic functions (see Figure~\ref{fig:Tents}).

Before giving the explicit weights, we discuss the errors in the approximation.

\subsection{Error from the quadrature in the tail and the overall error} 

In this section we estimate the errors from the approximation 
of $I^{\nu^\alpha}_T[u]$ using~\eqref{eq:approxint}, with the weights defined by~\eqref{eq:weightsform}. First we have the following lemma related to the error in quadrature using Lagrange interpolation.

\begin{lem}\label{lem:poly}
Let $\mathcal{P}^k f$ be the Lagrange polynomial interpolant with order $k$ of the function $f$ on the interval $[a,b]$, using equally spaced nodes which include the endpoints.
  Then for any nonnegative measure $\meas(t)$,
 \begin{equation}\label{pfQ1}
    \Big| \int_{a}^{b} \left(\mathcal{P}^k f(t) - f(t)\right) \meas(t)\dm t\Big| 
\le (b-a)^{k+1}  \frac{M_{k+1} }{2^{k+1} (k+1)! }  \int_{a}^{b} \meas(t)
\dm t,
\end{equation}
where $M_{k+1}$ is a bound on the $(k+1)$-th order derivative of $f$ on $[a,b]$.

\end{lem}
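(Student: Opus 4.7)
The plan is to combine the pointwise Lagrange interpolation remainder with a uniform bound on the associated nodal polynomial, then integrate against $\nu$. Denote $h=(b-a)/k$ and label the equispaced nodes $t_i=a+ih$ for $i=0,\ldots,k$. Assuming $f\in C^{k+1}([a,b])$ (implicit in the bound $M_{k+1}$), the classical Lagrange remainder gives, for each $t\in[a,b]$,
\[
f(t)-\mathcal{P}^k f(t)=\frac{f^{(k+1)}(\xi_t)}{(k+1)!}\,\omega(t), \qquad \omega(t):=\prod_{i=0}^{k}(t-t_i),
\]
for some $\xi_t\in[a,b]$ depending on $t$.

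The main step is the uniform estimate $\sup_{t\in[a,b]}|\omega(t)|\le\left(\tfrac{b-a}{2}\right)^{k+1}$. For $t\in[t_j,t_{j+1}]$ with $0\le j\le k-1$, I would bound the two nearest factors by $|t-t_j|\,|t-t_{j+1}|\le h^2/4$ (the maximum of a quadratic on its zero set) and the remaining factors by $|t-t_i|\le(j+1-i)h$ for $i<j$ and $|t-t_i|\le(i-j)h$ for $i>j+1$. Multiplying these gives $|\omega(t)|\le(h^{k+1}/4)(j+1)!(k-j)!$, and the maximum of $(j+1)!(k-j)!$ over $j\in\{0,\ldots,k-1\}$ is $k!$, attained at $j=0$ and $j=k-1$. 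Substituting $h=(b-a)/k$, the desired bound reduces to the elementary inequality $k!\,2^{k-1}\le k^{k+1}$, which follows by a short induction using $(1+1/k)^{k+1}\ge 2$ (with equality at $k=1$).

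Combining the two displays gives the uniform pointwise estimate
\[
|f(t)-\mathcal{P}^k f(t)|\le\frac{M_{k+1}(b-a)^{k+1}}{2^{k+1}(k+1)!}.
\]
Multiplying by $\nu(t)\ge 0$, integrating over $[a,b]$, and pulling the absolute value inside the integral via the triangle inequality deliver exactly the statement of the lemma. The only nontrivial step is the nodal polynomial estimate; the bound is sharp at $k=1$ (attained at the midpoint) and has slack for $k\ge 2$, but the clean power-of-two constant is convenient for the composite error analysis that follows.
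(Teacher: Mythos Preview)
Your proof is correct and follows the same route as the paper: invoke the pointwise Lagrange remainder, bound the nodal polynomial $\prod_i(t-t_i)$ uniformly by $((b-a)/2)^{k+1}$, and integrate against the nonnegative measure $\nu$. The only difference is that the paper simply asserts the nodal-polynomial bound without justification, while you supply a complete proof of it via the subinterval-by-subinterval estimate and the inequality $k!\,2^{k-1}\le k^{k+1}$; otherwise the arguments coincide.
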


\begin{proof}  The proof is elementary, but we include it for completeness.
The Lagrange interpolant on $[a,b]$ satisfies
\[
\mathcal{P}^k f(t) - f(t) = 
\frac{f^{(k+1)}\big( \xi(t)\big)}{(k+1)!} (t - t_0)\cdots (t - t_k),
\]
for some $\xi(t)$ in the interval $(a,b)$, 
where $a =t_0<t_1<\dots<t_k = b$ are the interpolation nodes.
So
\begin{align*}
\Big|\int_{a}^{b} \left(\mathcal{P}^k f(t) - f(t)\right) \meas(t) \dm t \Big|
&= \Big|\int_{a}^{b}  \frac{f^{(k+1)}\big( \xi(t)\big)}{(k+1)!} (t - t_0)
\cdots (t - t_k)  \meas(t) \dm t\Big| \cr 
&\leq \frac{M_{k+1}}{(k+1)!}\int_a^b \big| (t-t_0)\cdots (t-t_k)\big|
\meas(t)dt
\end{align*}
For any $t\in (a,b)$, we have the bound 
$|(t-t_0)\cdots (t-t_k)|\leq (b-a)^{k+1}/2^{k+1}$.  
Combine these estimates to obtain~\eqref{pfQ1}.
\end{proof}

The Composite Exact Quadrature Rule is given by piecewise polynomial interpolation of degree $k$ on the each sub-interval of size $kh$ in the interval $(h, \infty)$ or $(-\infty,-h)$. In other words, the piecewise
 interpolations $\mathcal{P}^kv$ in~\eqref{eq:approxint} are polynomials
of degree $k$ on the sub-intervals $[h,(k+1)h)],[(k+1)h,(2k+1)h],\cdots$.
The error in this approximation  is summarized in the following lemma.

\begin{lem}
The error for the Composite Exact Quadrature Rule is given by 
\begin{equation}\label{WeightInTail}
    \Abs{  \int_{\abs{t} \ge h} \left( \mathcal{P}^k f(t) - f(t)  \right ) \meas(t) \,dt }
\le 
\frac{  k^{k+1} M_{k+1} }{  {2^{k+1}(k+1)!}} 
\,
h^{k+1}  \int_{h}^\infty  \big[\meas(t) + \meas(-t)\big]   \dm  t
\end{equation}
where $M_{k+1}$ is a bound on the $(k+1)$-th order derivative of $f$.
\end{lem}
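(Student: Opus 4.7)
The plan is to reduce the composite estimate to a sum of single-interval bounds coming from Lemma \ref{lem:poly}, and then observe that the resulting sum of measures telescopes into a single tail integral.

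First I would decompose the domain of integration. By the construction of the Composite Exact Quadrature Rule, the positive half-line $(h,\infty)$ is partitioned into sub-intervals
\[
J_m^+ = \bigl[h + m\,kh,\; h + (m+1)\,kh\bigr], \qquad m = 0,1,2,\dots,
\]
each of length exactly $kh$, on which $\mathcal{P}^k f$ is the degree-$k$ Lagrange interpolant through $k+1$ equally spaced nodes including the endpoints. An analogous partition $\{J_m^-\}$ covers $(-\infty,-h)$. Writing the integral over $\{|t|\ge h\}$ as the sum over these sub-intervals and using the triangle inequality gives
\[
\Abs{\int_{|t|\ge h}\bigl(\mathcal{P}^k f(t)-f(t)\bigr)\meas(t)\,dt}
\;\le\;
\sum_{m\ge 0}\Abs{\int_{J_m^+}\!\!(\mathcal{P}^k f-f)\meas\,dt}
+
\sum_{m\ge 0}\Abs{\int_{J_m^-}\!\!(\mathcal{P}^k f-f)\meas\,dt}.
\]

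Next I would apply Lemma \ref{lem:poly} to each sub-interval individually. Since every $J_m^\pm$ has length $kh$, the lemma yields the per-interval bound
\[
\Abs{\int_{J_m^\pm}(\mathcal{P}^k f - f)\meas\,dt}
\;\le\;
(kh)^{k+1}\,\frac{M_{k+1}}{2^{k+1}(k+1)!}\int_{J_m^\pm}\meas(t)\,dt,
\]
where I may take $M_{k+1}$ to be a global bound on $|f^{(k+1)}|$ valid on all of $\{|t|\ge h\}$. The constant prefactor $(kh)^{k+1} = k^{k+1}h^{k+1}$ is exactly what appears in the desired estimate.

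Finally I would sum these local bounds. Because the sub-intervals $J_m^+$ tile $(h,\infty)$ without overlap (and likewise the $J_m^-$ tile $(-\infty,-h)$), additivity of the integral gives
\[
\sum_{m\ge 0}\int_{J_m^+}\meas(t)\,dt = \int_h^\infty \meas(t)\,dt,
\qquad
\sum_{m\ge 0}\int_{J_m^-}\meas(t)\,dt = \int_h^\infty \meas(-t)\,dt,
\]
and combining the two halves yields the stated bound. There is no real obstacle here; the only subtlety is bookkeeping, namely that the sub-interval length $kh$ (not $h$) is what governs the error, which is exactly what produces the $k^{k+1}$ factor, and that the bound $M_{k+1}$ must be chosen uniformly on $\{|t|\ge h\}$ so that the sum of per-interval estimates telescopes cleanly into a single tail integral of $\meas$.
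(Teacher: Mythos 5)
Your proposal is correct and follows essentially the same route as the paper's proof: partition $\{|t|\ge h\}$ into sub-intervals of length $kh$, apply Lemma~\ref{lem:poly} on each, and sum, using that the sub-intervals tile the tail so the measures add up to $\int_h^\infty[\meas(t)+\meas(-t)]\,dt$. The only difference is that you spell out the bookkeeping more explicitly than the paper does.
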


\begin{proof}
Write the integral as a sum over sub-intervals of length $kh$ and apply Lemma~\ref{lem:poly} on each sub-interval.  Then the combined error is bounded by 
\[
(kh)^{k+1} \frac{M_{k+1}}{2^{k+1}(k+1)!} 
\left ( 
 \int_{h}^\infty  \meas(t)    \dm  t
 + 
  \int_{-\infty}^{-h}  \meas(t)    \dm  t
\right )
\]
which gives the result~\eqref{WeightInTail}.
 \end{proof}


In particular, from the conditions on the measure~\eqref{eq:levymeas} and the additional assumption $\nu(y) \sim |y|^{-1-\alpha}$ near the origin, 
\[
\int_h^\infty \big[\meas(y) + \meas(-y)\big] \dm y = 
\int_{\abs{y} \ge 1} \meas(y) \dm y+ 
\int_{h \le \abs{y} \le 1}  \meas(y) \dm y \sim h^{-\alpha}.
 \]
So  the error  in approximating the tail of the integral~\eqref{Tail} is bounded by
\bq\label{ErrorTail}
\bO(h^{k+1-\al}).
\eq
This, together with the error $O(h^{4-\alpha})$ from~\eqref{SingIntApprox1}, implies the following overall error
of the method.
\begin{lem}\label{lem:accuracy1}
Suppose $u\in C^{4}$.  Then the combined error for  the finite difference scheme~\eqref{FLh} is $O(h^{2-\alpha})$
using weights obtained from exact quadrature using linear interpolation and $O(h^{3-\alpha})$ 
using weights obtained from exact quadrature using  quadratic interpolation.
\end{lem}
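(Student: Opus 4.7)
The plan is to assemble the estimate directly from the two error bounds already derived in the section: the singular-part error from~\eqref{SingIntApprox1} and the tail-part error from~\eqref{ErrorTail}. Write
\[
\FL u(x_i) - (-\Delta_h)^{\alpha/2} u_i
= \bigl(I^{\nu^\al}_S[u](x_i) - \tilde I^{\nu^\al}_S[u](x_i)\bigr)
+ \bigl(I^{\nu^\al}_T[u](x_i) - \tilde I^{\nu^\al}_T[u](x_i)\bigr),
\]
where $\tilde I^{\nu^\al}_S$ and $\tilde I^{\nu^\al}_T$ denote, respectively, the finite difference approximation of the singular integral (the first expression in~\eqref{SingIntWeights}) and the composite exact quadrature approximation of the tail defined through~\eqref{eq:approxint}–\eqref{eq:weightsform}. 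The triangle inequality reduces the proof to bounding these two pieces separately, and both bounds are already in hand.

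First I would invoke~\eqref{SingIntApprox1}, which gives a singular-part error of order $h^{4-\alpha}$ in terms of a bound on the fourth derivative of $u$; this uses the $C^4$ hypothesis. Next I would apply the tail bound~\eqref{ErrorTail} with $f(y) = v(y) = u(x_i) - u(x_i-y)$, noting that $v^{(k+1)}(y) = (-1)^k u^{(k+1)}(x_i - y)$, so a uniform bound on $u^{(k+1)}$ (provided by $u \in C^4$ for either $k=1$ or $k=2$) supplies the constant $M_{k+1}$. Combined with $\int_h^\infty[\nu(y)+\nu(-y)]\,dy = O(h^{-\alpha})$ for the fractional-Laplacian measure $\nu^\al$, this yields a tail-part error of order $h^{k+1-\alpha}$.

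Adding the two contributions, for linear interpolation ($k=1$) the total is $O(h^{2-\alpha}) + O(h^{4-\alpha}) = O(h^{2-\alpha})$, and for quadratic interpolation ($k=2$) it is $O(h^{3-\alpha}) + O(h^{4-\alpha}) = O(h^{3-\alpha})$, giving the claim. No step presents a genuine obstacle, since both ingredients are established earlier; the only care needed is the bookkeeping observation that the interpolation in the tail is carried out in the variable $y$ with the function $v(y)=u(x_i)-u(x_i-y)$, so derivatives of $v$ convert directly into derivatives of $u$, and the tail-integral bound $\int_h^\infty \nu^\al(y)\,dy \sim h^{-\alpha}$ is exactly the mechanism by which one power of $h$ is lost relative to the nominal $h^{k+1}$ rate of interpolation.
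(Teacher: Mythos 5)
Your proposal is correct and follows exactly the argument the paper itself uses: the lemma is obtained by adding the $O(h^{4-\alpha})$ singular-part error from~\eqref{SingIntApprox1} to the $O(h^{k+1-\alpha})$ tail error from~\eqref{ErrorTail}, with $k=1$ for $w^T$ and $k=2$ for $w^Q$. The only refinement you add is the explicit bookkeeping that derivatives of $v(y)=u(x_i)-u(x_i-y)$ are derivatives of $u$, which the paper leaves implicit.
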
 

%

\begin{remark}
In fact, a careful examination of the errors from the singular part
and the tail above suggests at least two improvements. 
Higher order interpolation polynomials can be used, but 
the corresponding weights computed from~\eqref{eq:weightsform}
can be negative and many desired properties like maximum principle
are lost. Another possible improvement is to choose a wider
interval for the singular part of the integral $I_S^{\nu}[u]$, say
from $h$ to $h_0$. The leading order errors now become 
$h_0^{4-\alpha}$ (from $I_S^{\nu^\alpha}[u]$) and $h^{k+1}h_0^{-\alpha}$ (from $I_T^{\nu^\alpha}[u]$). 
The balance of these two errors leads to the optimal 
choice $h_0\sim h^{(k+1)/4}$ and consequently an
overall error of $O(h^{(k+1)(4-\alpha)/4})$. Since the 
improvement of the order for linear or quadratic interpolation is just
a fractional power of $\alpha$, we do not pursue this direction here.
\end{remark}

\section{Explicit calculation of the weights}
\label{sect:explicitweights}
\subsection{Piecewise Linear and quadratic interpolants}  
\label{sec:tent}
Piecewise polynomial interpolation is a standard topic in elementary numerical analysis.   For our purpose, we recall the formulas for the first and second order interpolants.  These formulas will be used  to derive explicit weights $w_j$ from the semi-exact quadrature rules against the weight function $\meas(y)$.  

Given the values $\{ v(x_j) \}$ on the grid $x_j=jh$ with spacing $h$, we defined the following piecewise linear and quadratic interpolants (See Figure~\ref{fig:Tents}).

The piecewise linear interpolant is 
given by 
\[
    \mathcal{P}^1_h v(x) = \sum_{j\in \mathbb{Z}} v(x_{j}) \hhat(x-x_{j}),
\]
where $\hhat(t)$ is the ``tent'' function
\[
\hhat(t) = 
\begin{cases}
    1 - \abs{t}/h \qquad & \abs{t} \le h,
\\
0 & \text{ otherwise}.
\end{cases}
\]
The  piecewise quadratic interpolant is given by 
\[
    \mathcal{P}^2_hv(x) = \sum_{j \text{ even}} v(x_{j}) \Qe(x-x_{j})  
    + \sum_{j \text{ odd}}
    v(x_{j}) \Qo(x-x_{j}),
\]
where $\Qe(x)$ is the quadratic Lagrange polynomial which 
interpolates $(0,1,0)$ at $(-h,0,h)$:
\[
\Qe(t) = 
\begin{cases} 
    1-t^2/h^2 \qquad & \abs{t} \le h,
\\
\quad 0 & \text{ otherwise},
\end{cases}
\]
and $\Qo(x)$ is  a piecewise quadratic Lagrange polynomial which 
interpolates $(0,0,1)$ on the left and $(1,0,0)$ on the right:  
\[
\Qo(t) = 
\begin{cases} 
    1-3\abs{t}/2h+t^2/2h^2    
\quad & \abs{x} \le 2h,
\\
\quad 0  & \text{ otherwise}.
\end{cases}
\]

\begin{figure}
 \begin{center}
  \includegraphics[totalheight=0.24\textheight]{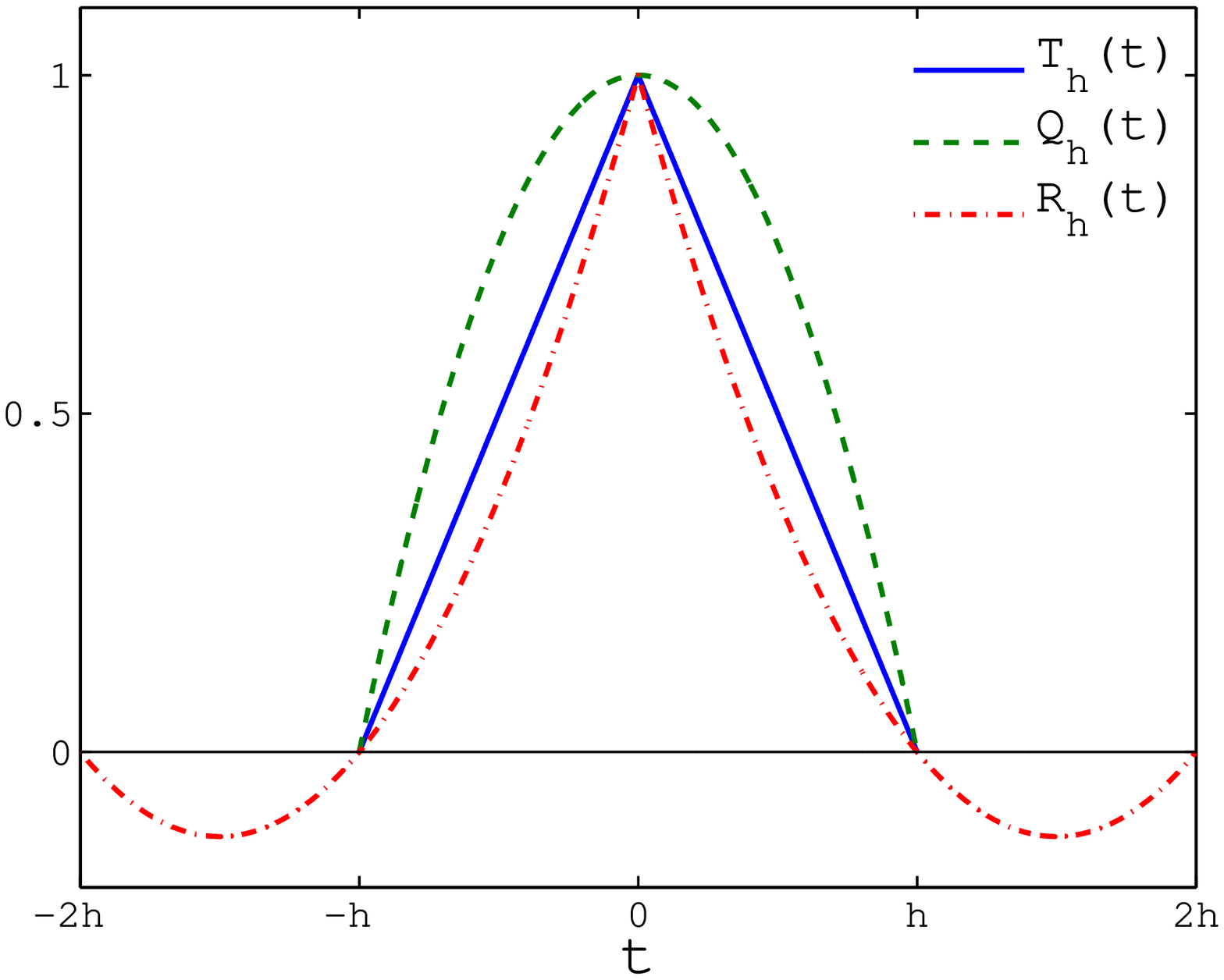}
  \includegraphics[totalheight=0.24\textheight]{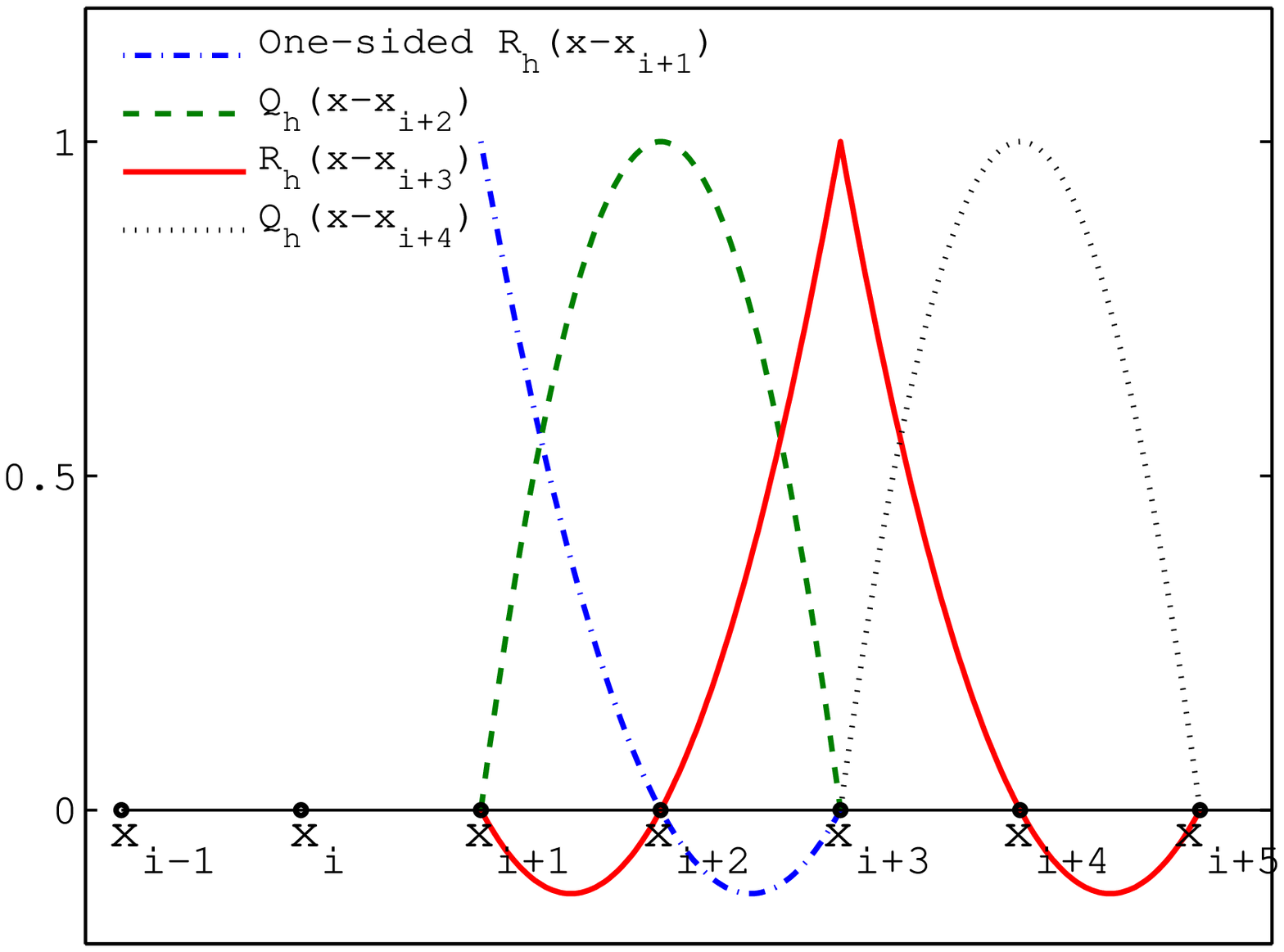}
 \end{center}
\caption{Left: the tent function $\hhat$, and the quadratic interpolants $\Qe, \Qo$. Right: the basis functions for quadratic interpolation.}
\label{fig:Tents}
\end{figure}

\subsection{Formulas for weights in semi-exact quadrature} 
To derive the explicit expressions for the weights,
we need the following lemma,  which is proved using integration by parts.

\newcommand{\Lhh}{\Big |_{-h}^h}

\begin{lem}
\label{lem:tent_int_by_parts}
Let $F(t)$ be a $C^2$ function and $G(t)$ be a $C^3$ function 
on $[-2h,2h]$, 
$T_h(t), Q_h(t), R_h(t)$ be defined above.  Then
\begin{align}
\label{TentIBP}
\int_{-h}^h \hhat(t) F''(t) \dm t &=
\frac{1}{h}\big( F(h)-2F(0)+F(-h)\big),
\\ \label{Qinbyparts}
\int_{-h}^h Q_h(t)G'''(t)dt &= \frac{2}{h}\big(G'(h)
+G'(-h)\big)-\frac{2}{h^2}\big(G(h)-G(-h)\big),
\\ \label{Rinbyparts} 
\int_{-2h}^{2h} R_h(t)G'''(t)dt &=
-\frac{G'(2h)+6G'(0)+G'(-2h)}{2h}+\frac{G(2h)-G(-2h)}{h^2}.
\end{align}
Also, we will need the following one-sided integrals at the boundaries
\begin{align}\label{boundint}
\int_{0}^h \hhat(t) F''(t) \dm t &=
-F'(0) + \frac {F(h)-F(0)}{ h}, \\
\label{boundint2}
\int_{0}^{2h} \Qo(t) G'''(t) \dm t &=
-G''(0) - \frac{G'(2h)+3G'(0)}{2h}
+\frac{G(2h)-G(0)}{h^2}.
\end{align}
\end{lem}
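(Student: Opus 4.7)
The plan is to prove each of the five identities by direct integration by parts, being careful to split the integration domain at any point where the piecewise-polynomial weight ($T_h$ or $R_h$) has a kink, and then tally boundary contributions at the endpoints and at the interior kink. Throughout, I use the values and derivatives of the basis functions read off from their defining formulas.

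For \eqref{TentIBP}: since $T_h$ is linear on $[-h,0]$ and $[0,h]$ with a corner at $t=0$, I split the integral at $0$. On $[0,h]$, two applications of integration by parts give $\int_0^h T_h F'' = [T_h F']_0^h - [T_h' F]_0^h + \int_0^h T_h'' F$, and since $T_h''=0$ in the interior of the piece, only boundary terms survive; using $T_h(h)=0,\, T_h(0)=1,\, T_h'=-1/h$ on $(0,h)$ yields exactly \eqref{boundint}. The symmetric computation on $[-h,0]$ produces the mirror expression; adding the two makes the $\pm F'(0)$ terms cancel and leaves $(F(h)-2F(0)+F(-h))/h$, which is \eqref{TentIBP}. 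So \eqref{boundint} essentially falls out for free as an intermediate step.

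For \eqref{Qinbyparts}: $Q_h$ is a single quadratic on $[-h,h]$, so I simply integrate by parts three times. The first pass uses $Q_h(\pm h)=0$ to kill the boundary term and reduces the integrand to $-Q_h' G''$ with $Q_h'(t)=-2t/h^2$; the second pass gives a boundary term $(2/h)(G'(h)+G'(-h))$ together with the integral of $Q_h'' G' = (-2/h^2) G'$; a final integration of the latter yields $(-2/h^2)(G(h)-G(-h))$, combining to \eqref{Qinbyparts}.

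For \eqref{Rinbyparts} and \eqref{boundint2}: $R_h$ has a kink at $t=0$ with $R_h(0)=1$ and a jump in $R_h'$ from $3/(2h)$ on the left to $-3/(2h)$ on the right, so I split at $0$ and integrate by parts twice on $[0,2h]$ and on $[-2h,0]$. On $[0,2h]$, using $R_h(2h)=0$, $R_h(0)=1$, $R_h'(2h)=1/(2h)$, $R_h'(0^+)=-3/(2h)$, and $R_h''=1/h^2$, the computation gives \eqref{boundint2} directly. The mirror computation on $[-2h,0]$ gives the same expression with $G''(0)$ replaced by $-G''(0)$, with $2h$ replaced by $-2h$, and with the sign of the $3G'(0)/(2h)$ term unchanged (because $R_h'(0^-)=+3/(2h)$, matching the sign flip from reversing orientation). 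Summing the two, the $\pm G''(0)$ boundary contributions at the kink cancel, the two $-3G'(0)/(2h)$ terms combine to $-6G'(0)/(2h)$, and the $\pm 2h$ endpoint terms assemble into \eqref{Rinbyparts}.

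There is no real analytical difficulty — only careful bookkeeping. The one step where a sign slip is easiest is the cancellation of the $G''(0)$ contributions at the interior kink of $R_h$ in \eqref{Rinbyparts}; it is worth remarking explicitly that this cancellation occurs precisely because $R_h$ is continuous at $t=0$ with the same value $1$ from both sides, whereas the $G'(0)$ contributions survive because $R_h'$ has a jump of size $-3/h$ there.
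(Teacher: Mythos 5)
Your proof is correct and is exactly the argument the paper intends: the paper states only that the lemma ``is proved using integration by parts'' and omits the details, and your computation --- integrating by parts on each polynomial piece, splitting at the kinks of $T_h$ and $R_h$, and tracking the boundary terms (in particular the cancellation of the $G''(0)$ terms at $t=0$ due to continuity of $R_h$, and the survival of the $G'(0)$ terms due to the jump of $R_h'$) --- supplies them correctly; I verified all five identities, including the endpoint values $R_h(2h)=0$, $R_h'(2h)=1/(2h)$, $R_h'(0^\pm)=\mp 3/(2h)$. The only quibble is expository: the phrase ``the same expression with $2h$ replaced by $-2h$'' for the mirror computation on $[-2h,0]$ is slightly loose about the accompanying sign flips in the antiderivative terms, but the final tally you state is the correct one.
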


We now combine the linear or quadratic interpolation with Lemma~\ref{lem:tent_int_by_parts} to obtain the weights.
First, rewrite~\eqref{eq:weightsform} in a 
form consistent with the above lemma,
\[
    w_j = \int_{|y|\geq h} P_j(y-x_j)\nu(y)dy =
\int_{|x_j+t|\geq h} P_j(t) \nu(t+x_j)dt,
\]
where $P_j(t)$ is $T_h(t)$, $Q_h(t)$ or $R_h(t)$ defined above.

\begin{defn}\label{defn:fg}
Define the functions $F(t)$ and $G(t)$ to be primitives of the weight function: $F''(t) = G'''(t)=\nu(t)$. In particular, 
in the case of fractional Laplacian, $\nu^\alpha(t)=C_{1,\alpha}t^{-1-\al}$, 
\[
F(t) = 
\begin{cases}
\frac{C_{1,\alpha}}{(\al-1)\al} \abs{t}^{1-\al},& \al \not = 1
\\
- C_{1,\alpha}\log |t|, & \al = 1,
\end{cases}
\qquad
G(t) = 
\begin{cases}
\frac{C_{1,\alpha}}{(2-\al)(\al-1)\al} \abs{t}^{2-\al},& \al \not = 1
\\
C_{1,\alpha}(t-t\log |t|),  & \al = 1.
\end{cases}
\]
\end{defn}

For the piecewise linear interpolants, using~\eqref{eq:weightsform}, 
 for $|j|>1$ the weights  are given by
\begin{align*}
 w_j^T=\int_{|x_j+t|\geq h} T_h(t)F''(x_j+t)dt
 &=\frac{1}{h}\big( F(x_j+h)-2F(x_j)+F(x_j-h)\big) \cr
 &=h^{-\alpha}\big( F(j+1)-2F(j)+F(j-1)\big).
\end{align*}

For the piecewise quadratic interpolants, for $|j|>1$ the weights  are given by
\begin{align*}
 w_j^Q &= \int_{|x_j+t|\geq h} Q_h(t)G'''(x_j+t)dt \cr
 &=  2h^{-\alpha}\big[  G'(j+1)+G'(j-1)
 - G(j+1)+G(j-1)\big],  & \text{$j$ even}.
\\
 w_j^Q &= \int_{|x_j+t|\geq h} R_h(t)G'''(x_j+t)dt \cr
 &= h^{-\alpha}\left[
 -\frac{G'(j+2)+6G'(j)+G'(j-2)}{2}
 +G(j+2)-G(j-2)
 \right],  & \text{$j$ odd}.
\end{align*}

When $j=1$, one part of the domain of the integral falls into the singular interval $[-h,h]$ that is isolated in $I_S^\nu[u]$, so 
we need to use the one-sided integrals~\eqref{boundint} or~\eqref{boundint2} for the other part of the integal.
Combined with the contribution on the singular interval in~\eqref{SingIntWeights}, the weights for $j=1$ become
\begin{equation}
    w_1^T = \frac{C_{1,\alpha}h^{-\alpha}}{2-\alpha} + 
    \int_0^h T_h(t)F''(x_1+t)dt
    = h^{-\alpha}\left[\frac{C_{1,\alpha}}{2-\alpha}  
-F'(1)+F(2)-F(1)                \right]
\end{equation}
and 
\begin{equation}
    w_1^Q = h^{-\alpha}\left[
        \frac{C_{1,\alpha}}{2-\alpha}-G''(1)-\frac{G'(3)+3G'(1)}{2}+G(3)-G(1)
        \right].
\end{equation}
The weights are even, $w_{-j}^T=w_j^T$ and $w_{-j}^Q =
w_j^Q$, since we chose the interpolation functions to be symmetric.

We summarize the expression of the weights as follows.
\begin{defn}\label{defn:weights}
The weights corresponding to the finite difference
method~\eqref{FLh} of the fractional Laplacian for  piecewise linear interpolation are given by 
\[
 w_j^T=
 h^{-\alpha}
 \begin{cases}
\dfrac{C_{1,\alpha}}{2-\alpha}  
-F'(1)+F(2)-F(1), & j = \pm 1,
\\
F(j+1)-2F(j)+F(j-1), & j = \pm 2, \pm 3, \cdots.
\end{cases}
\]
The weights corresponding to piecewise quadratic interpolation are
\[
w_j^Q =
h^{-\alpha}
 \begin{cases}
\dfrac{C_{1,\alpha}}{2-\alpha}-G''(1)-\dfrac{G'(3)+3G'(1)}{2}+G(3)-G(1), & j = \pm 1,
\\
 2\big[  G'(j+1)+G'(j-1)
 - G(j+1)+G(j-1)\big], & j = \pm 2, \pm 4, \pm 6, \cdots
\\
 -\dfrac{G'(j+2)+6G'(j)+G'(j-2)}{2}
 +G(j+2)-G(j-2), & j = \pm 3,\pm 5,\pm 7,\cdots.
\end{cases}
\]
The weight $w_0$ can be arbitrary, because it does not enter into \eqref{FLh}.
\end{defn}

\subsection{Discussion of properties of the weights}\label{sec:properties weights}
We collect some important properties of the weights here, focusing on the  case $\meas = \meas^\alpha$ of the fractional Laplacian.  The more general cases with positive L\'{e}vy measures can be studied in a similar way.

\emph{Positivity of the weights.} 
The weights obtained from linear and  quadratic interpolation, summarized in Definition~\ref{defn:weights},
are positive.  The positivity  is easy to see for weights derived from linear interpolation, and for the even weights derived from quadratic interpolation, since $T_h$ and $Q_h$ are positive. 
Although it is not immediately clear for odd weights from quadratic interpolation, their positivity has been  verified 
numerically. 

The positivity of the weights $w_j$ for $j\neq 0$ is
important for the maximum principle, stability (or monotonicity) and convergence of the discrete solutions for the extended Dirichlet problem. 
Notice that the appearance of negative weights is exactly the reason we do not use higher order polynomial interpolation.  For instance  using cubic interpolation polynomials leads to $w_3 < 0$ with, and similar negative weights arise  for higher
order polynomials. In the extreme case of infinitely many interpolation nodes, the global basis function $P_j(x) = \mbox{sinc} \frac{x}{h} = \frac{\sin \pi x/h}{\pi x/h}$ is used and the corresponding weights $w_j$ with even indices $j$ are in fact all negative when $\alpha > 1$, see below.

\emph{Scaling of the weights}
The weights from both linear and quadratic interpolation 
have certain scaling properties inherited from the measure $\nu^\alpha(y) \,dy$. 
The dependence on $h$ is always $h^{-\alpha}$, reflecting the fact that 
the fractional Laplacian has a fractional derivative of 
order $\alpha$. Furthermore, the weights $w_j$ also decay
at a rate $j^{-1-\alpha}$, once again because of the fact
$\nu^\alpha(y) \sim |y|^{-1-\alpha}$ as $|y|\to\infty$. The scaling
rate can also be checked for both weights using the asymptotic expansions
of the auxiliary functions $F$ and $G$ (and their derivatives) in $j$, when $j$ is large.

\emph{Consistency when $\al \to 2^-$.}
In the limit $\al \to 2^-$, we recover 
the standard three point central difference scheme for $-\partial_{xx}$.
In fact, using the explicit expression $\Cal$ from~\eqref{eq:constC} and 
the weights $w_j$ either from linear or quadratic interpolation, we can get 
\[
 \lim_{\al \to 2^-}  w_j = \begin{cases}
                                           h^{-2}, \qquad & j = \pm 1,\cr
0, &\mbox{otherwise},
                                          \end{cases}
\]
and hence for both $w^T$ and $w^Q$,
\[
 \lim_{\al\to 2^{-}} [(-\Delta_h)^{\alpha/2}u]_i = \frac{u_{i}-u_{i+1}}{h^2}+\frac{u_{i}-u_{i-1}}{h^2}
=-\frac{u_{i+1}-2u_i+u_{i-1}}{h^2}.
\]

\emph{CFL condition.} 
Using an explicit method for the parabolic evolution operator leads to a restriction on the time step, $dt$.  The bound on the time step is given by 
\[
dt \le \sum_{j \not= 0} w_j,
\]
the sum of the weights, which is derived here.

We expect an upper bound of the form $Ch^{-\alpha}$ for 
the sum of the total weights (except the irrelevant $w_0$), 
based on the fact that  $w_j =
O(|j|^{-1-\alpha}h^{-\alpha})$ and the expression $\Cal $
given by~\eqref{eq:constC}. 
In fact, this sum can be worked out explicitly,
 \begin{equation}\label{eq:sumT}
   \sum_{j\neq 0} w_j^T = 2h^{-\alpha}\Big[\frac{C_{1,\alpha}}{2-\alpha}
-F'(1)\Big] = \frac{2^{\alpha}\Gamma\big( (\alpha+1)/2\big)}
{\pi^{1/2}\Gamma(2-\alpha/2)}h^{-\alpha}, 
 \end{equation}
and 
\begin{equation} \label{eq:sumQ}
   \sum_{j\neq 0} w_j^Q = 2h^{-\alpha}\Big[\frac{C_{1,\alpha}}{2-\alpha}
-G''(1)\Big]=\frac{2^{\alpha}\Gamma\big( (\alpha+1)/2\big)}
{\pi^{1/2}\Gamma(2-\alpha/2)}h^{-\alpha},
\end{equation}
the same for both weights. Another important observation is that, without the factor $h^{-\alpha}$,
the sums are uniformly bounded for any $\alpha \in (0,2)$, and do not 
degenerate as $\alpha$ approaches either 0 or~2. 

\section{Convergence of solutions to the Dirichlet boundary value problem}
\label{sec:proof}
In this section, we provide a convergence proof of solutions to the extended Dirichlet boundary value problem~\eqref{FLD}.
This gives a rigorous foundation for the scheme, by showing that for smooth enough data, the error of the solution (in the maximum norm) has the same order of accuracy as the local truncation error of the scheme. 

Some of the techniques we use to prove the convergence are similar to those for linear elliptic partial differential equations~\cite[Chapter4]{larsson2009partial}. We also have to find a \emph{super-solution} $v(x)$ to control the error on the interval $[-1,1]$,
where $v$ satisfies
\[
(-\Delta)^{\alpha/2}v(x) \geq  1,\quad x \in (-1,1)
\]
and $v(x)\geq 0$ on $(-\infty,-1]\cup [1,\infty)$.

The natural candidate for the super-solution is  $v_G(x) = K_\alpha (1-|x|^2)^{\alpha/2}_+ $ discovered by Getoor~\cite{MR0137148} with $K_\alpha = 2^\alpha\Gamma\big(1+\alpha/2\big)\Gamma\big((1+\alpha)/2\big)\big/{\sqrt{\pi}}$.
Here $v_G$ satisfies
\[
(-\Delta)^{\alpha/2}v_G(x) = 1, \quad x \in (-1,1),
\]
and $v_G(x)\equiv 0$ for $|x|\geq 1$.
This function is the probability density function for the expected exit time for a particle undergoing a random walk governed by the $\alpha$-stable L\'{e}vy process.
When $\alpha=2$, $v_G(x)=\frac{1}{2}(1-|x|^2)_+$  is the classical super-solution used to prove the convergence of finite difference methods to elliptic equations~\cite{larsson2009partial}. 
Therefore, it is natural to use the sampled function $v_i = v_G(ih)$ as a discrete super-solution of the problem
$(-\Delta_h)^{\alpha/2}v_i \geq 1$ if $|ih|<1$. However, a rigorous proof of this observation seems complicated, although numerically 
it is tested to be true for both $w_j^T$ and $w_j^Q$. Instead, we sample from a simple, discontinuous quadratic function.

\subsection{The discrete super-solution}
We first rewrite the expression for the operator $(-\Delta_h)^{\alpha/2}$ in~\eqref{FLh} as follows,
\begin{equation}\label{DelW}
\FLh u_i 
= \Cal \sum_{j=1}^\infty (-\delta_j u_i)w_j,
\end{equation}
where we use the notation $\delta_j u_i = u_{i+j} - 2u_i + u_{i-j}$.  Here with a temporary abuse of notation,  we removed the constant $\Cal$ from the weights, in order to show that the result is independent of $\al$. 

After discretization on a grid, the extended Dirichlet problem~\eqref{FLD} becomes
\begin{equation}\label{FLhD}\tag{$\text{FLD}_h$}
\begin{aligned}
\FLh u_i &= f_i, && \text{ for } i \in  \Ddh \\
u_i &= g_i,  && \text{ for  } i \in \DdCh
\end{aligned}
\end{equation}
where $\DdCh = \mathbb{Z}_h\setminus\Ddh$.
To be specific, we set 
\bq\label{DdInterval}
\Ddh = \{i \in \mathbb{Z}: \abs{i h} < 1\},
\eq  
although other domains could be treated.  Then we have the following
$\alpha$-independent discrete super-solution.

\begin{lem}\label{Lem41}
Define
\[
v(x) = 
\begin{cases}
4- x^2, & \abs{x} < 1\\
0, & \text{ otherwise}.
\end{cases}
\]
Then the grid function $v_i=v(ih)$ satisfies
\begin{equation}\label{vbarrier}
\begin{aligned}
\FLh v_i &\ge  1, && \text{ for } i \in  \Ddh,
\\
v_i &= 0,  && \text{ for  } i \in \DdCh,
\end{aligned}
\end{equation}
for $h$ small enough.
\end{lem}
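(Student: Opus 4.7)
The second condition, $v_i = 0$ for $i \in \DdCh$, is immediate from the definition of $v$. The work is to establish $(-\Delta_h)^{\alpha/2} v_i \geq 1$ for every $i \in \Ddh$.

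My plan is to use the convolution form $(-\Delta_h)^{\alpha/2} v_i = \sum_{j=1}^{\infty} (-\delta_j v_i)\, W_j$, with positive weights $W_j = \Cal w_j$ from Definition~\ref{defn:weights}, and split the sum according to whether both shifts $(i \pm j)h$ remain in $(-1,1)$ or at least one exits. Writing $x = ih$ and $M = \lfloor (1-|x|)/h \rfloor$, the ``interior'' indices are $J_0 = \{1, \dots, M\}$ and the ``exterior'' indices are $\{M+1, M+2, \dots\}$.

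Next I would compute or lower-bound $-\delta_j v_i$ in each regime. For $j \in J_0$, substituting $v(y) = 4 - y^2$ at $x_i$ and $x_{i \pm j}$ gives the exact identity $-\delta_j v_i = 2 j^2 h^2$, since this is the second difference of $-y^2$. For $j \notin J_0$, at least one of $v_{i\pm j}$ equals $0$, and since $0 \leq v(y) \leq 4$ we have $v_{i+j} + v_{i-j} \leq 4$, hence $-\delta_j v_i \geq 2 v_i - 4 = 4 - 2 x^2 \geq 2$ because $|x|<1$. Combined with positivity of the $W_j$, this yields
\begin{equation*}
(-\Delta_h)^{\alpha/2} v_i \;\geq\; \sum_{j=1}^{M} 2 j^2 h^2 \, W_j \;+\; (4 - 2 x^2) \sum_{j=M+1}^{\infty} W_j.
\end{equation*}

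The final step is to show this lower bound exceeds $1$ uniformly in $i \in \Ddh$ for $h$ small enough. When $|x|$ is close to $1$ (so $M$ is bounded), the tail sum $\sum_{j > M} W_j$ is of order $h^{-\alpha}$ by the asymptotics $W_j \sim \Cal h^{-\alpha} j^{-1-\alpha}$ from Section~\ref{sec:properties weights}, dominating $1$ for small $h$. When $|x|$ is bounded away from $1$, a Riemann-sum comparison against $\int_0^{1-|x|} 2 t^2 \nu^\alpha(t) \, dt$ and $\int_{1-|x|}^\infty \nu^\alpha(t) \, dt$ shows the two sums converge respectively to $\tfrac{2\Cal}{2-\alpha}(1-|x|)^{2-\alpha}$ and $\tfrac{\Cal}{\alpha}(1-|x|)^{-\alpha}$, and a direct algebraic check verifies that the resulting combination strictly exceeds $1$ for every $x \in (-1,1)$ and every $\alpha \in (0,2)$. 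The main obstacle I anticipate is making these continuous comparisons quantitative and uniform across the transition regime where $(1-|x|)/h$ is $O(1)$ and neither limit dominates; this requires keeping explicit error terms, which can be extracted from the telescoping structure $W_j = h^{-\alpha}[F(j+1)-2F(j)+F(j-1)]$ that underlies the closed forms in Definition~\ref{defn:weights}.
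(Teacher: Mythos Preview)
Your case analysis for $-\delta_j v_i$ is exactly the paper's Step~1: both in $\Ddh$ gives $2(jh)^2$, both out gives $\ge 6$, one in and one out gives $\ge 2$. Where you diverge is in how you aggregate. You split the sum at the $x$-dependent index $M=\lfloor(1-|x|)/h\rfloor$, which produces an $x$-dependent lower bound and forces the case analysis (near boundary, interior, transition) that you correctly flag as the main obstacle.

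The paper sidesteps this obstacle with one extra observation: the three cases together yield the \emph{uniform} pointwise bound
\[
-\delta_j v_i \;\ge\; \min\!\big(2,\,2(jh)^2\big),\qquad i\in\Ddh,\ j\ge 1,
\]
whose right-hand side no longer depends on $i$. This lets you split the sum at the fixed index $jh=1$ (rather than at $j=M$), so the lower bound becomes
\[
\FLh v_i \;\ge\; 2\sum_{jh<1} w_j\,(jh)^2 \;+\; 2\sum_{jh\ge 1} w_j,
\]
a single expression independent of $x$. Each sum is then a Riemann sum for $\int_0^1 y^{1-\alpha}\,dy$ and $\int_1^\infty y^{-1-\alpha}\,dy$ respectively, giving the closed form $\Cal\big(\tfrac{4}{2-\alpha}+\tfrac{4}{\alpha}\big)$ up to $O(h^{k+1-\alpha})$, and an elementary Gamma-function check shows this exceeds~$1$ for all $\alpha\in(0,2)$.

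So your route is not wrong, but the transition-regime difficulty you anticipate is self-inflicted: once you notice that your own case bounds already imply $-\delta_j v_i\ge\min(2,2(jh)^2)$, the $x$-dependence disappears and no regime analysis is needed.
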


Note that the function $v(x)$ defined above is discontinuous.

\begin{remark}\label{remarkBarrier} 
Here we explain why it is okay to use a discontinuous function for the super solution.
Perron's method is used to prove existence of solutions which attain the boundary values.  For Perron's method, continuous super-solutions, or barrier functions,  such as the one provided by Getoor, are needed.   However, for the stability results proved here, continuity is not required, 
 and it is more convenient to use a  discontinuous function. This simplicity is compensated by a larger (and thus cruder) constant in the estimates below. 
\end{remark}

\begin{proof}
We prove the lemma in two steps.

Step 1.  We will establish that 
\begin{equation}\label{eq:step1}
-\delta_j v_i \ge \min(2, 2(jh)^2), \quad \text{ for } i \in \Ddh,\ j>0.
\end{equation}

Case (i).  Suppose $i+j$ and $i-j$ are both in  $\Ddh$.   Then 
\[
-\delta_j v_i = h^2
\left (   
(i + j)^2 + (i-j)^2 - 2i^2
\right )
= 2(jh)^2.
\]

 Case (ii).  Suppose $i+j$ and $i-j$ are both outside $\Ddh$.  Then
\[
-\delta_j v_i = 2v_i  = 8 -  2(ih)^2  \ge 6.
\]

 Case (iii).  Suppose exactly one of $i\pm j$ is in $\Ddh$.  
Set $y = (i \pm j)h$, choosing the sign so that $\abs{y} < 1$.  Then
\[
-\delta_j v_i = -v(y) + 2u(x) = 4 - 2x^2 + y^2 \ge 4 - 2x^2 \ge 2.
\]
So \eqref{eq:step1} holds in each case.

Step 2. 
Using \eqref{eq:step1}, we have for $i \in \Ddh$,
\[
\FLh v_i \ge  \sum_{j=1}^\infty w_j \min(2,2(jh)^2) 
=   2\Big(\sum_{\abs{jh} \geq 1}  w_j +
\sum_{\abs{jh}< 1} w_j (jh)^2\Big) .
\]
The two sums above can be estimated separately by integrals.  
For the first sum,
\begin{equation}\label{eq:1stSPsum}
\sum_{|jh|\geq 1} 
w_j \approx 2C_{1,\alpha}
\int_1^\infty y^{-1-\al} dy = \frac{2C_{1,\alpha}}{\al},
\end{equation}
with an error $O(h^{k+1})$ depending on the degree $k$ of the polynomial used to derive $w_j$. For the second sum, 
\begin{equation}\label{eq:2ndSPsum}
\sum_{|jh|< 1} 
 w_j \approx 2C_{1,\alpha} 
\int_0^1 y^{1-\al} dy = \frac{2C_{1,\alpha}}{2-\alpha},
\end{equation}
with an error $O(h^{k+1-\alpha}+h^{4-\alpha})$. 
When $h$ is small enough,
\[
\FLh v_i \ge \Cal \left( \frac{4}{2-\al} + \frac{4}{\al} \right)
= \frac{2^{\alpha+1}\Gamma\big( (\alpha+1)/2\big)}
{\Gamma(1/2)\Gamma(2-\alpha/2)} > 1,
\]
since $2^\alpha>1$, $\Gamma\big( (\alpha+1)/2\big) > 
\pi^{1/2}=\Gamma(1/2)$ and $\Gamma(2-\alpha/2)<\Gamma(2)=1$. 
\end{proof}

\subsection{Maximum principle and convergence proof}
To prove the convergence of the discrete problem~\eqref{FLhD}, we begin with a  standard result: positivity of the weights in a discrete Laplace operator implies a maximum principle. 
  The proof is included for completeness, similar results can be found in~\cite[Chapter 6]{larsson2009partial}.

\begin{lem}[Maximum principle for~\eqref{FLhD}] \label{lem:MP}
Let $u$ satisfy $\FLh u_i \le 0$ for $i \in \Ddh$.  Then
\[
\max_{i \in \Ddh} u_i \le \max_{i \in \DdCh} u_i.
\]
Similarly, if $\FLh u \ge 0$ for $i \in \Ddh$, then 
\[
\min_{i \in \Ddh} u_i \ge  \min_{i \in \DdCh} u_i.
\]
\end{lem}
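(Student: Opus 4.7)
The plan is a standard discrete-maximum-principle argument by contradiction, relying on the positivity of every weight $w_j$ for $j\neq 0$ (established in Section~\ref{sec:properties weights}) and on the fact that $\Ddh$ is a bounded set of grid points, so that for every $i\in\Ddh$ there exist indices $j\geq 1$ with $i+j\in\DdCh$ or $i-j\in\DdCh$.

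First I would consider the first assertion. Set $M=\max_{i\in\Ddh}u_i$ and $M'=\max_{i\in\DdCh}u_i$, and assume toward contradiction that $M>M'$. Pick any $i^*\in\Ddh$ achieving $u_{i^*}=M$. Using the symmetric form of the operator,
\begin{equation*}
\FLh u_{i^*}=\sum_{j=1}^{\infty}\bigl(2u_{i^*}-u_{i^*+j}-u_{i^*-j}\bigr)w_j,
\end{equation*}
I would examine each term by cases. If both $i^*\pm j\in\Ddh$, then $u_{i^*\pm j}\le M$ and the bracket is $\ge 0$. If exactly one of $i^*\pm j$ lies in $\DdCh$, then the corresponding value is $\le M'<M$ while the other is $\le M$, so the bracket is $\ge M-M'>0$. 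If both lie in $\DdCh$, the bracket is $\ge 2(M-M')>0$. Combined with $w_j>0$, every term is nonnegative, and at least one index $j_0$ with $i^*\pm j_0\in\DdCh$ exists because $\Ddh$ is bounded (take $j_0$ larger than the diameter of $\Ddh$ in grid units); the corresponding term is strictly positive. Therefore $\FLh u_{i^*}>0$, contradicting the hypothesis $\FLh u_{i^*}\le 0$.

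The second assertion (minimum principle under $\FLh u\ge 0$) follows by replacing $u$ with $-u$ and applying the first part, or by repeating the argument verbatim with inequalities reversed.

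The only nontrivial ingredient is the strict positivity of $w_j$ for all $j\neq 0$, which is already in hand from Section~\ref{sec:properties weights}; without it the case analysis above would fail since a vanishing weight could absorb a strictly positive bracket. Beyond that, the argument is entirely combinatorial and does not depend on $\al$, $h$, or which interpolation order (linear or quadratic) was used to compute the weights. I expect no real obstacle; the only care needed is to verify that some $j_0$ with $i^*\pm j_0\in\DdCh$ always exists, which is immediate from the choice of $\Ddh$ in~\eqref{DdInterval}.
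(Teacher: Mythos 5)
Your proof is correct and follows essentially the same route as the paper's: assume the maximum over $\Ddh$ exceeds the maximum over $\DdCh$, note that the maximizer is then a global maximum, and use the strict positivity of the weights $w_j$ to force $\FLh u_{i^*}>0$, contradicting the hypothesis. Your version merely spells out the case analysis and the existence of a strictly positive term more explicitly than the paper does.
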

\begin{proof}
Suppose $\FLh u_i \le 0$ for $i \in \Ddh$. If
\[ 
u_{i_0} = \max_{i \in \Ddh} u_i > \max_{i \in \DdCh} u_i,
\] 
$u_{i_0}$ is the global maximum on $\mathbb{Z}_h$. As a result, 
\[
0\geq \FLh u_{i_0}=\sum \big(u_{i_0}-u_{i_0-j}\big) w_j > 0,
\]
which is a contradiction. So $u$ cannot have a global maximum on $\Ddh$, and $\max_{i \in \Ddh} u_i \le \max_{i \in \DdCh} u_i$, as desired.
The second result follows by a similar argument.
\end{proof}

Define the notation for the maximum norm of a grid function on the set of indices $I$:
\[
\norm{u}_{I} = \max_{i \in I} \abs{u_i}.
\]
\begin{lem}
For any grid function $u: \mathbb{Z}_h \to \R$ and $\Ddh = \{ i: |ih|<1\}$,
we have
\begin{equation}\label{normCrhs}
\norm{u}_{\Ddh} \le \norm{u}_{\DdCh} + 4\norm{\FLh u}_{\Ddh}.
\end{equation}
\end{lem}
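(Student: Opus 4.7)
The plan is to use the discrete super-solution $v_i$ from Lemma~\ref{Lem41} as a barrier, together with the maximum principle from Lemma~\ref{lem:MP}, in a standard comparison argument. Set $M := \norm{\FLh u}_{\Ddh}$ and consider the two auxiliary grid functions
\[
w_i^{\pm} := \pm u_i - M v_i.
\]
Since $\FLh$ is linear and the weights are independent of $i$, we can compute $\FLh w_i^{\pm} = \pm \FLh u_i - M\, \FLh v_i$ at any $i \in \Ddh$.

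Next I would use the two defining properties of $v$: on the interior $\FLh v_i \ge 1$, while on the exterior $v_i = 0$. By the definition of $M$ we have $\pm \FLh u_i \le M$, so
\[
\FLh w_i^{\pm} \le M - M\cdot 1 = 0 \quad\text{for } i \in \Ddh.
\]
Applying the maximum-principle statement of Lemma~\ref{lem:MP} to $w^{\pm}$, I obtain
\[
\max_{i \in \Ddh} w_i^{\pm} \le \max_{i \in \DdCh} w_i^{\pm} = \max_{i \in \DdCh}(\pm u_i) \le \norm{u}_{\DdCh},
\]
where the equality uses that $v_i \equiv 0$ on $\DdCh$. Rearranging and using the pointwise bound $0 \le v_i \le 4$ gives $\pm u_i \le \norm{u}_{\DdCh} + 4M$ for each $i \in \Ddh$; combining the two choices of sign yields $\abs{u_i} \le \norm{u}_{\DdCh} + 4\norm{\FLh u}_{\Ddh}$, which is \eqref{normCrhs}.

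There is no serious obstacle: the whole argument is a one-line application of Lemmas~\ref{Lem41} and~\ref{lem:MP} once one chooses the correct comparison function. The only thing to keep track of is the constant $4$, which is simply $\max_x v(x) = v(0)$; this is the reason Lemma~\ref{Lem41} was set up with $v(x) = 4 - x^2$ rather than, say, $1-x^2$, since the factor $4$ persists through to~\eqref{normCrhs}. The assumption $h$ small enough is inherited from Lemma~\ref{Lem41} and needs no further verification here.
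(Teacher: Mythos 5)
Your proof is correct and is essentially the paper's own argument: both use the barrier $v$ from Lemma~\ref{Lem41} scaled by $\norm{\FLh u}_{\Ddh}$ as a comparison function and invoke the maximum principle of Lemma~\ref{lem:MP}, the only cosmetic difference being that you handle the upper and lower bounds simultaneously via $\pm u - Mv$ where the paper treats $u - Mv$ and then says ``similarly.'' The bookkeeping of the constant $4 = \max v$ and the remark that ``$h$ small enough'' is inherited from Lemma~\ref{Lem41} are both accurate.
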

\begin{proof}
Use the function $v$ from Lemma~\ref{Lem41} and  set 
\[
z =  u -  \norm{\FLh u}_{\Ddh} v.
\]
Then for any $i \in D_h$,
\[
\FLh z_i =  \FLh u_i - \norm{\FLh u}_{\Ddh} \FLh v_i 
\le 0.
\]
By the maximum principle in Lemma~\ref{lem:MP}, $\max_{\Ddh} z \le \max_{\DdCh} z$.   Since $v = 0$ on $\DdCh$ and $|v|\leq 4$ on $\Ddh$, we have
\[
\max_{\DdCh} z = \max_{\DdCh}  u.
\]
and 
\[
\max_{\Ddh} u \le \max_{\DdCh}  z + \norm{\FLh u}_{\Ddh} 
\max_{\Ddh} v \le \norm{u}_{\DdCh} + 4\norm{\FLh u}_{\Ddh}.
\]
Similarly, we can show that $ \min_{\Ddh} u \geq  -\norm{u}_{\DdCh} - 4\norm{\FLh u}_{\Ddh}$ and this completes the proof of~\eqref{normCrhs}.
\end{proof} 

\begin{defn}[Solution Error and Local Truncation Error]
Let $u$ be the solution of \eqref{FLD} and let $u^h$ be the solution of \eqref{FLhD}.
Define the approximate solution error as
\[
e^h_j = u(x_j) - u^h_j,
\]
and the truncation error as
\[
r^h_j = \FL u(x_j)  - \FLh u_j.
\]
\end{defn}
Next, we combine the previous results to prove the convergence
of the Dirichlet problem~\eqref{FLhD}.

\begin{thm}\label{thm1}
Let $u$ be the solution of \eqref{FLD} and let $u^h$ be the solution of \eqref{FLhD}.
Assume that $u$ is smooth enough for the truncation error estimate to be valid.
The maximum of the approximate solution error is bounded by (a constant times) the maximum of the truncation error:
\begin{equation}\label{errbound}
\norm{e^h}_{\Ddh} \le  4 \norm{r^h}_{\Ddh}.
\end{equation}
\end{thm}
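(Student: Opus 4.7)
The plan is to apply the previous stability lemma (equation \eqref{normCrhs}) directly to the error function $e^h$. The key observation is that $e^h$ itself solves a discrete fractional Laplacian problem whose right-hand side on $\Ddh$ is exactly $-r^h$ and whose data on $\DdCh$ vanishes, so the a priori estimate \eqref{normCrhs} will convert a truncation-error bound into a solution-error bound with no extra work.

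First, I would observe that on $\DdCh$ both $u(x_j) = g(x_j)$ (since $u$ solves \eqref{FLD}) and $u^h_j = g_j$ (by the definition of the discrete problem \eqref{FLhD}). Hence $e^h_j = 0$ for $j \in \DdCh$, so $\norm{e^h}_{\DdCh} = 0$. Next, for $j \in \Ddh$, linearity of $\FLh$ gives
\begin{equation*}
\FLh e^h_j = \FLh u(x_j) - \FLh u^h_j = \bigl[\FLh u(x_j) - \FL u(x_j)\bigr] + \bigl[f(x_j) - f_j\bigr] = -r^h_j,
\end{equation*}
using that $\FL u(x_j) = f(x_j)$ on $\Ddh$ and $\FLh u^h_j = f_j$ in \eqref{FLhD}.

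With these two facts in hand, I apply the stability estimate \eqref{normCrhs} to the grid function $e^h$. This yields
\begin{equation*}
\norm{e^h}_{\Ddh} \;\le\; \norm{e^h}_{\DdCh} + 4\,\norm{\FLh e^h}_{\Ddh} \;=\; 0 + 4\,\norm{r^h}_{\Ddh},
\end{equation*}
which is exactly \eqref{errbound}.

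Since essentially all the content of the theorem has already been packed into the discrete maximum principle (Lemma~\ref{lem:MP}) and the discrete super-solution (Lemma~\ref{Lem41}, giving \eqref{normCrhs}), there is no real obstacle left at this stage; the only thing to be careful about is the sign convention in the definition of the truncation error and the fact that the smoothness assumption on $u$ is used only implicitly through Lemma~\ref{lem:accuracy1} to guarantee that $\norm{r^h}_{\Ddh}$ is in fact $O(h^{2-\al})$ or $O(h^{3-\al})$. I would mention this briefly so that the reader can combine \eqref{errbound} with Lemma~\ref{lem:accuracy1} to read off the global convergence rate.
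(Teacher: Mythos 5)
Your proposal is correct and follows essentially the same route as the paper: identify $\FLh e^h_j$ with (minus) the truncation error via linearity, note $e^h = 0$ on $\DdCh$, and invoke the stability estimate \eqref{normCrhs}. In fact you are slightly more careful than the paper about the sign (with the paper's convention $r^h_j = \FL u(x_j) - \FLh u_j$ one indeed gets $\FLh e^h_j = -r^h_j$, not $+r^h_j$), but since only $\norm{\FLh e^h}_{\Ddh}$ enters the estimate this makes no difference.
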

\begin{proof}
Using the definition of the error $e^h_j = u(x_j) - u^h_j$ and the residual we have
\begin{align*}
\FLh e^h_j &= \FLh u_j - \FLh u^h_j  = r^h_j.
\end{align*}
Then, using \eqref{normCrhs}, along with the fact that $e^h = 0$ on $\DdCh$, we obtain~\eqref{errbound}.
\end{proof}

In particular, the convergence rate is given by the global accuracy from Lemma~\ref{lem:accuracy1}, which is $O(h^{2-\alpha})$ for $w^T$ and 
$O(h^{3-\alpha})$ for $w^Q$.

\section{Far field boundary conditions} 
\label{sec:farfieldBC}
In this section, we discuss some practical issues related to the 
truncation of the computational domain and the implementation of the
boundary conditions.

\subsection{Truncation of the domain}
The discrete operator~\eqref{FLh}  involves an infinite sum, and this is the case 
even in the extended Dirichlet problem~\eqref{FLhD} which involves a finite number of unknowns.   In practice, the computational domain has to be truncated.  If the function $g(x)$ does not decay fast enough, the contribution of the integral~\eqref{eq:SI} outside this domain should be approximated.   As we see from the numerical example below, the inclusion of  the truncated boundary terms  is essential for an accurate evaluation of the fractional Laplacian.

We can simply truncate \eqref{FLh} at a finite value $M$  of the index $j$, resulting in the operator
\begin{equation}\label{eq:truncFL}
(-\Delta_h)_M^{\alpha/2} u_i = \sum_{j=-M}^M (u_i-u_{i-j})w_j.
\end{equation}
However, large errors can accumulate because of the slow decay of 
function $u_i$ and the weights $w_j \approx j^{-1-\alpha}$.  The singular integral expression~\eqref{eq:SI} allows us to take advantage of analytical formulas for the truncated terms, reducing the overall error
significantly. More precisely, when $\FL u(x_i)$ is written as
\begin{equation}\label{eq:bdexpand}
\underbrace{\int_{-L_W}^{L_W} \big(u(x_i)-u(x_i-y)\big)\nu(y) dy}_\textrm{(I)}
+ \underbrace{\int_{|y|>L_W}u(x_i)\nu(y)dy}_\textrm{(II)}
- \underbrace{\int_{|y|>L_W}u(x_i-y)\nu(y)dy}_\textrm{(III)},
\end{equation}
with $L_W = Mh$, the truncated sum~\eqref{eq:truncFL} approximates the first term (I), with an error, $O(h^{3-\alpha})$ or $O(h^{2-\alpha})$
for $w^T$ and $w^Q$, respectively, which  is independent of $M$. 

We next consider the second term 
$\int_{|y|>L_W}u(x_i)\nu(y)dy = u(x_i)\int_{|y|> L_W} \nu(y)dy$. 
For general $\nu(y)$,  this integral may require numerical quadrature, however in the case of primary interest here,  when $\nu(y)=\nu^\alpha(y)$,  we can integrate to  obtain
\begin{equation}\label{eq:bc2}
 \textrm{(II)} = 2u(x_i)C_{1,\alpha}\int_{L_W}^\infty y^{-1-\alpha}dy
=\frac{2C_{1,\alpha}}{\alpha (L_W)^\alpha}u(x_i).
\end{equation}
This prefactor $2C_{1,\alpha}(L^W)^{-\alpha}/\alpha$ can also be
approximated alternatively by the sum\footnote{When the domain is truncated at $L_W=Mh$, $w_{\pm M}$ are evaluated using the one-sided integrals~\eqref{boundint} or~\eqref{boundint2}, and they are
different in the two sums $\sum_{|j|\leq M} w_j$ and $\sum_{|j|\geq M} w_j$.
} $\sum_{|j|\geq M} w_j$ = $\sum_{j\neq 0} w_j
-\sum_{|j|\leq M} w_j$, with the total sum $\sum_{j\neq 0} w_j$ given  in~\eqref{eq:sumT} or~\eqref{eq:sumQ}.

The approximation of the last term (III) depends on the situation at hand.
For the extended Dirichlet problem~\eqref{FLD}, assuming the domain 
$D$ is the interval $[-L,L]$, then $u(x_i+y)=g(x_i+y)$ is given on $|y|>L_W$, if $L_W\geq 2L$ (notice that we only need $|x_i|\leq L$). Therefore, this 
term is known and its evaluation, which may require numerical quadrature, is independent of our scheme; in 
the special case $g\equiv0$, this term is identically zero.

\subsection{Approximations of the boundary term (III)}
\label{sec:bciii}
In addition to the extended Dirichlet problem, we wish to consider equations posed on the whole space $\R$.  This situation occurs in evolution equations like the fractional Burgers equation~\cite{MR1637513,MR2227237} or the fractional porous medium equations~\cite{MR2737788,MR2817383,MR2847534}.
In the case where the equation is posed on the whole real line, some knowledge asymptotic behavior of the function is required.
In the examples cited above, solutions usually develop an algebraic tail, with or without explicit exponents. 
In this case, it is impractical to choose  $L_W$ large enough so that the contribution of (III) in~\eqref{eq:bdexpand} can be ignored.
On the other hand, this algebraic tail can help us extract the leading order
contribution of (III).

If $u(x)$ decays to zero algebraically, 
$u(y) \sim |y|^{-\beta}$, then we can write 
\begin{align*}
u(y) &\approx u(L)L^\beta |y|^{-\beta},&&  \text{ as } y \to \infty,
\\
u(y) &\approx u(-L)L^\beta |y|^{-\beta}, && \text{ as } y \to -\infty. 
\end{align*}
Substituting these asymptotic expansions into (III), 
and setting  $\nu=\nu^\alpha$ and $L_W \geq 2L$, we obtain
\begin{subequations}
\begin{align}\label{eq:bc31}
\int_{L_W}^\infty u(x_i-y)\nu^\alpha(y)dy&\approx 
C_{1,\alpha}u(-L)L^\beta
\int_{L_W}^\infty (y-x_i)^{-\beta}y^{-1-\alpha} dy\cr
&=\frac{C_{1,\alpha}u(-L)L^\beta}{ (\al+\beta)(L_W)^{\alpha+\beta}}\ {}_2F_1\Big(\beta,\al+\beta;\al+\beta+1;\frac{x}{L_W}
\Big)
\end{align}
and
\begin{equation}\label{eq:bc32}
 \int^{-L_W}_{-\infty} u(x_i-y)\nu^\alpha(y)dy \approx 
\frac{C_{1,\alpha}u(L)L^\beta}{ (\al+\beta)(L_W)^{\alpha+\beta}}\ {}_2F_1\Big(\beta,\al+\beta;\al+\beta+1;-\frac{x}{L_W}
\Big),
\end{equation}
\end{subequations}
where ${}_2F_1$ is the (Gauss) hypergeometric function. These two terms
~\eqref{eq:bc31} and~\eqref{eq:bc32} take into account the major
contribution from (III).

One should notice that by choosing the limit of the integration in (I)
from $-L_W$ to $L_W$, we need the values of $u$ on the interval
$[-(L+L_W),L+L_W]$ instead of $[-L,L]$. The algebraic extension 
$u(y) \approx u(L)L^\beta |y|^{-\beta}$ as $y \to \infty$ and $u(y) \approx u(-L)L^\beta |y|^{-\beta}$ as $y\to -\infty$ can be used again to 
provide the information outside the interval $[-L,L]$. 
In the numerical experiments below, $L_W$ is chosen to be $2L$. 
When (I) is evaluated by a fast convolution algorithm, the total 
computational cost is $O(N\log N)$, where $N$ is the number of
grid points on the truncated computational domain $[-L,L]$.

\begin{figure}[htp]
 \begin{center}
  \includegraphics[totalheight=0.26\textheight]{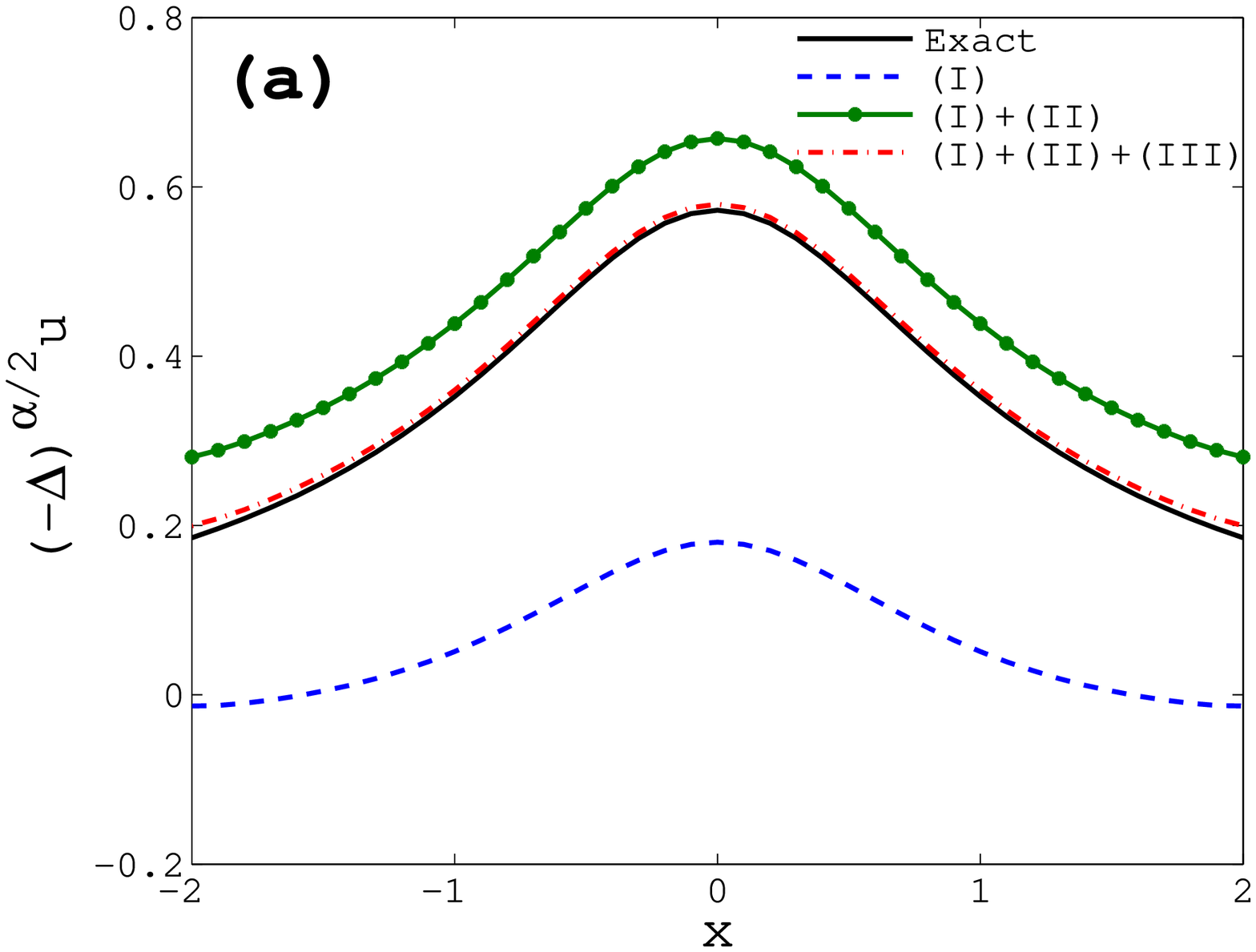}
$~~~~$
  \includegraphics[totalheight=0.26\textheight]{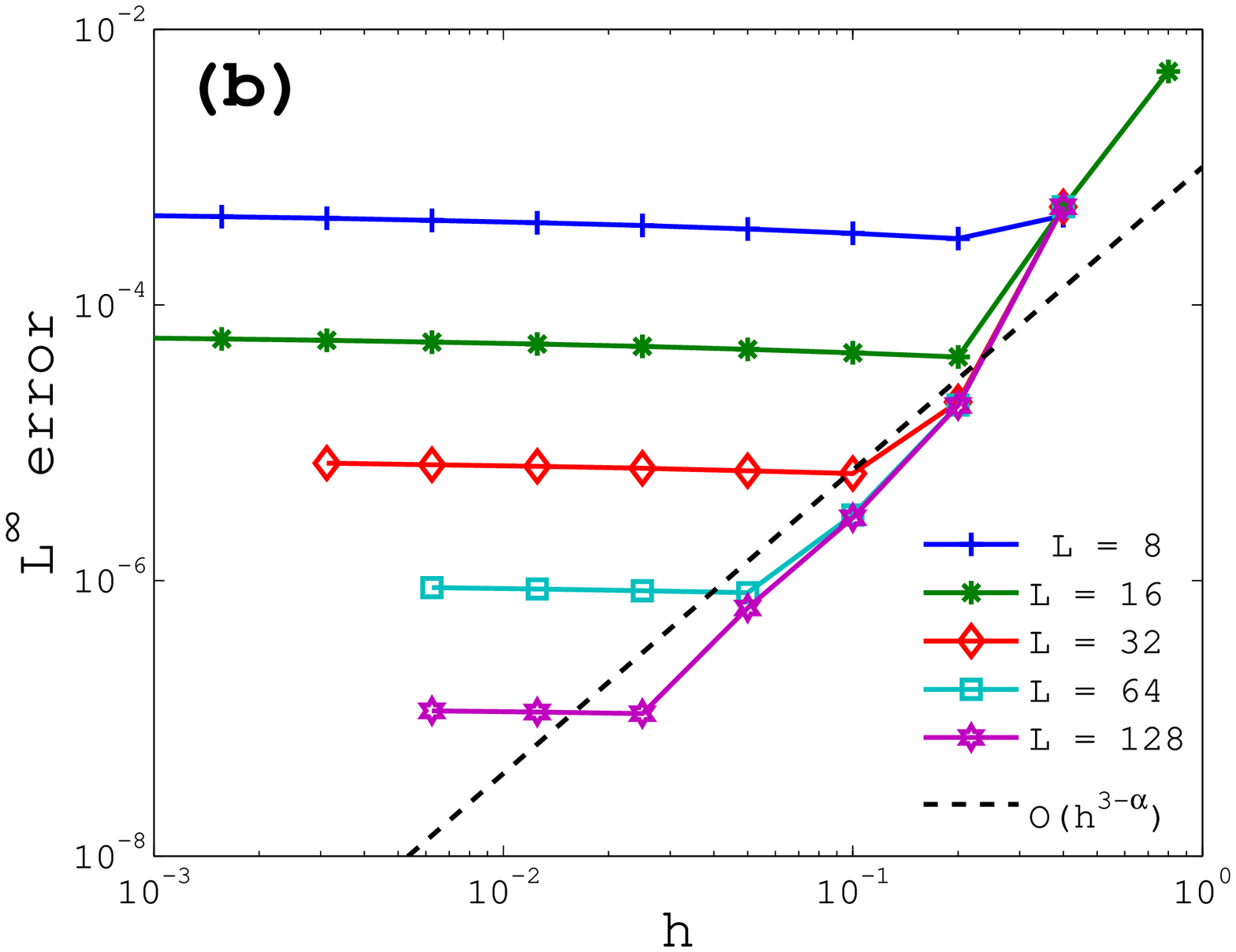}
 \end{center}
\caption{{\bf (a)}: The contributions of the terms (\textrm{I}), (\textrm{II}) and (\textrm{III}) to the accuracy of the 
method (using $w^Q$) for fractional Laplacian \eqref{eq:onedfractest}, with 
$\al = 0.4$, $L=2.0$ and $h=0.1$.  {\bf (b)}: The $L^\infty$ accuracy of the method using 
 different domain sizes and grid points.   The error saturates near the dashed line, which corresponds to error $O(h^{3-\alpha})$.
 }
\label{fig:convtest}
\end{figure}

The contributions of the three terms (I), (II) and (III) are illustrated
in Figure~\ref{fig:convtest}(a) using the weights $w^Q$, for 
\[
u(x) = (1+x^2)^{-(1-\al)/2}
\]
with the exact fractional Laplacian
\begin{equation} \label{eq:onedfractest}
\FL u(x)
=2^\al \Gamma\Big(\frac{1+\al}{2}\Big)\Gamma\Big(\frac{1-\al}{2}\Big)^{-1}(1+x^2)^{-(1+\al)/2}.
\end{equation}
When the computational domain is taken to be as small as $[-2,2]$, neither
(I) nor  (I)+(II) gives an accurate result. 
However, the inclusion of the 
contributions~\eqref{eq:bc31} and~\eqref{eq:bc32} from (III) reduces the overall error significantly, despite the crude approximation of 
an algebraic tail with the exact exponent $\beta=1-\alpha$. 

The accuracy (in the $L^\infty$ norm) of the operator 
as a function of the domain size, $L$, and the grid spacing,  $h$, is shown 
in Figure~\ref{fig:convtest}(b). One salient feature is the saturation of
the error: for a fixed size, $L$, of the domain, the 
error does not decrease any more when $h$ is smaller than a certain 
critical value. This gives another indication on the importance of the far 
field boundary conditions. The convergence rate before the saturation 
is $3-\alpha$, consistent with the error analysis summarized in Lemma~\ref{lem:accuracy1}.

\begin{remark} 
The method extends to other far field boundary conditions, 
such as $u(x) \to c_\pm$  as $x\to \pm \infty$ with an algebraic rate.  (This is the case for certain  traveling waves in fractional conservation laws).
\end{remark}

\begin{remark} When the exponent $\beta$ is not available, it could be estimated from the solution itself by data fitting, 
assuming that $L$ is large enough to be in the algebraic decaying region. 
\end{remark}

\section{Numerical experiments}
\label{sec:numerics}
In this section we perform additional  numerical experiments.   We perform two kinds of tests: the first is a test of the accuracy of the operator (measuring the error in applying the operator)  the second is a test of convergence (measuring the error in the solution of the equation involving the operator).

  The results validate the theoretically predicted accuracy and convergence rates for smooth functions.   These examples, which have slow decay in the far field, also illustrate the errors which come from uncontrolled errors in the truncated domain.

\subsection{Accuracy for smooth functions}\label{sec:num1}

The first computational example involves the function  $u(x)=e^{-x^2}$, which is  a smooth function with exponential decay. The fractional Laplacian of $u$ at  $x=0$ can be obtained exactly using the  Fourier Transform, 
\[
 (-\Delta)^{\alpha/2}u(0) = \frac{1}{\sqrt{\pi}}\int_0^\infty k^\alpha e^{-k^2/4} dk = 2^\alpha\Gamma\Big(\frac{1+\alpha}{2}\Big)\big/\sqrt{\pi}.
\]
Because $u$ decays exponentially, there is no need to consider the boundary 
contribution from (III). The convergence rate is shown in Figure~\ref{fig:CVcomp}.   In addition, in this figure, we compared our method  with the method from~\cite{cont2005finite} (for different values of $\epsilon$). In this context, the
scheme from~\cite{cont2005finite} reads
\begin{align*}
(-\Delta)^{\alpha/2}u(0) &= C_{1,\alpha}\int_{-\epsilon}^\epsilon 
\big( u(0)-u(y)\big)|y|^{-1-\alpha}dy 
+C_{1,\alpha}\int_{|y|> \epsilon} \big( u(0)-u(y)\big)|y|^{-1-\alpha}dy  \cr
&\approx -\frac{C_{1,\alpha}\epsilon^{2-\alpha}}{2-\alpha}u''(0)
+C_{1,\alpha} \sum_{j=0}^\infty \big(u(0)-u(\epsilon+(j+1/2)h)\big)\int_{\epsilon+jh}^{\epsilon+(j+1)h} |y|^{-1-\alpha}dy \cr
&\qquad +C_{1,\alpha} \sum_{j=0}^\infty \big(u(0)-u(-\epsilon-(j+1/2)h)\big)\int_{-\epsilon-(j+1)h}^{-\epsilon-jh} |y|^{-1-\alpha}dy, 
\end{align*}
where $u''(0)$ is also approximated by central difference.

The accuracy of the method from~\cite{cont2005finite}  is very close to $O(h^{2-\al})$, which is the accuracy of  our method using the weights $w^T$.  For our method using the weights $w^Q$, the accuracy is $O(h^{3-\al})$.   

 In addition, the measures in \cite{cont2005finite} decayed exponentially, so in our case, 
  the errors from  truncation of the integral in the far field are more significant.
  In addition,  the accuracy of these methods  is $O(h^{2-\al})$ which degenerates as $\al \to 2$.  For this reason, the quadratic weights $w^Q$ are needed to obtain reasonable accuracy.
 
The second example is a smooth function which decays algebraically: 
\[
u(x)=(1+x^2)^{-(1-\alpha)/2}.
\]
This solution was discussed in Section~\ref{sec:bciii}, and the fractional Laplacian of $u$ is given by~\eqref{eq:onedfractest}.
 In this case, the scheme recovers 
 the asymptotic error $O(h^{2-\alpha})$ for $w^T$ and $O(h^{3-\alpha})$ for $w^Q$, provided the computational domain is large enough to control the truncation error .  We performed computations with computational domain size $L = 8$ and $L=64$ to demonstrate the interaction of the domain size $L$ with the error.  See Figure~\ref{fig:smoothconv}. 

\begin{figure}[htp]
 \begin{center}
  \includegraphics[totalheight=0.26\textheight]{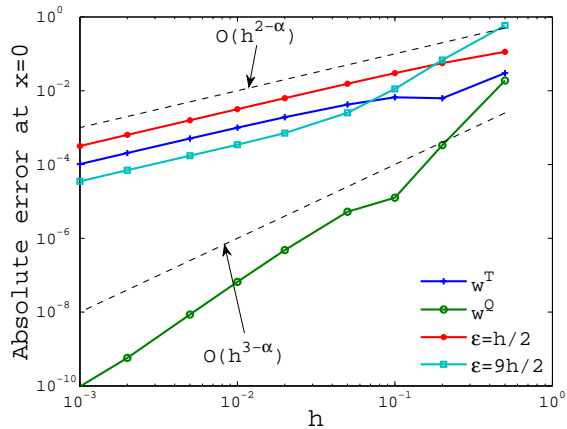}
 \end{center}
 \caption{The accuracy of the scheme on the function $u(x)=e^{-x^2}$.  The $w^T$ method and 
the method in~\cite{cont2005finite}, with two values of $\e$ are $O(h^{2-\alpha})$. 
The $w^Q$ method is the most accurate:  $O(h^{3-\alpha})$.
With parameters are $L=10$ and $\alpha=0.8$.
}
\label{fig:CVcomp}
\end{figure}

\begin{figure}[htp]
 \begin{center}
  \includegraphics[totalheight=0.26\textheight]{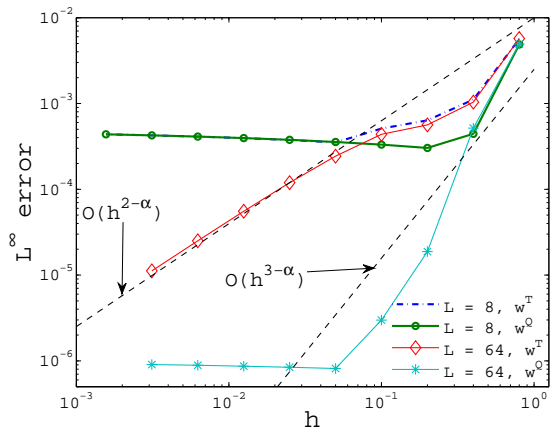}
 \end{center}
  \caption{
 The accuracy of the scheme on $u(x)=(1+x^2)^{-(1-\alpha)/2}$, with $\alpha = 0.8$, for two domain sizes.  The error for each rule is as predicted, but saturates for fixed domain sizes.
 }
  \label{fig:smoothconv} 
\end{figure}

\subsection{Accuracy for non-smooth functions}
When the function is non-smooth the method is still consistent, but the accuracy is decreased.
The next example is for a solution  $u$ is which is  continuous, but the derivatives at $x=\pm 1$  are discontinuous, so $u \in C^0(\R)$.
It comes from taking 
\[
f(x) =  (1-x^2)_+^{1-\al/2},
\]
then the solution of $\FL u = f$ is given by\footnote{The formula here is slightly different from the one in the reference, because we have corrected what we believe to be two typos}~\cite{MR2817383},
\begin{equation}\label{eq:explobs}
u(x)=\begin{cases}
      2^{-\al}\pi^{-1/2} \Gamma\left(\frac{1-\al}{2}\right)\Gamma\left(2-\frac{\al}{2}\right)
\Big(1-(1-\al)x^2\Big)
\qquad  &\mbox{if\ } |x|\leq 1,\cr
2^{-\al} \frac{ \Gamma\left(\frac{1-\al}{2}\right)\Gamma\left(2-\frac{\al}{2}\right)} 
{\Gamma\left(\frac{\al}{2}\right) \Gamma\left(\frac{5-\al}{2}\right)}|x|^{\al-1} {}_2F_1\left(
\frac{1-\al}{2},\frac{2-\al}{2};\frac{5-\al}{2};\frac{1}{|x|^2}
\right) &\mbox{if\ } |x|\geq 1.
     \end{cases}
\end{equation}

As shown in Figure~\ref{fig:obconvtest}(a), the error is largest near $x=\pm 1$, the points with discontinuous second order derivatives. 
Numerical experiments give an accuracy of  $O(h^{1-\al/2})$, see Figure~\ref{fig:obconvtest}(b). While consistent with the decreased regularity of the solution, this is  much slower than the previous (more regular) example.  

However, the accuracy improves as the solution becomes smoother. 
The next example involves a function,  $u$, which is  $C^1(\mathbb{R})$, with $u''$ discontinuous at $x=\pm 1$. 
This function arises when we take (note the exponent is larger)
\[
f(x)=(1-x^2)_+^{2-\alpha/2}.
\]
Then the solution of $\FL u = f$ is given by~\cite{MR2817383}
\begin{equation}\label{eq:explobs1}
u(x)=\begin{cases}
      2^{-\al-1}\pi^{-1/2} \Gamma\left(\frac{1-\al}{2}\right)\Gamma\left(3-\frac{\al}{2}\right)
\Big(1-(2-2\al)x^2+(1-\frac{4}{3}\alpha+\frac{1}{3}\alpha^2)x^4\Big)
\ &\mbox{if\ } |x|\leq 1,\cr
2^{-\al} \frac{ \Gamma\left(\frac{1-\al}{2}\right)\Gamma\left(3-\frac{\al}{2}\right)} 
{\Gamma\left(\frac{\al}{2}\right) \Gamma\left(\frac{7-\al}{2}\right)}|x|^{\al-1} {}_2F_1\left(
\frac{1-\al}{2},\frac{2-\al}{2};\frac{7-\al}{2};\frac{1}{|x|^2}
\right) &\mbox{if\ } |x|\geq 1.
     \end{cases}
\end{equation}
Taking $\alpha=0.8$ we computed the operator on $[-2,2]$ and plot the results in Figure~\ref{fig:obconvtest1}(a). 
For $w^T$, the convergence rate is  $O(h^{2-\alpha})$, as expected.
But for $w^Q$, it is $O(h^{2-\alpha/2})$, better than for $w^T$, but not as good as the rate $O(h^{3-\alpha})$ which requires more regularity of the solution.   Refer to Figure~\ref{fig:obconvtest1}(b).  
Again, the accuracy is high enough that the local truncation error is dominated by the error from the truncation of the far field boundary conditions, as $h$ is decreased, for fixed $L$.

\begin{figure}[htp]
 \begin{center}
\includegraphics[totalheight=0.26\textheight]{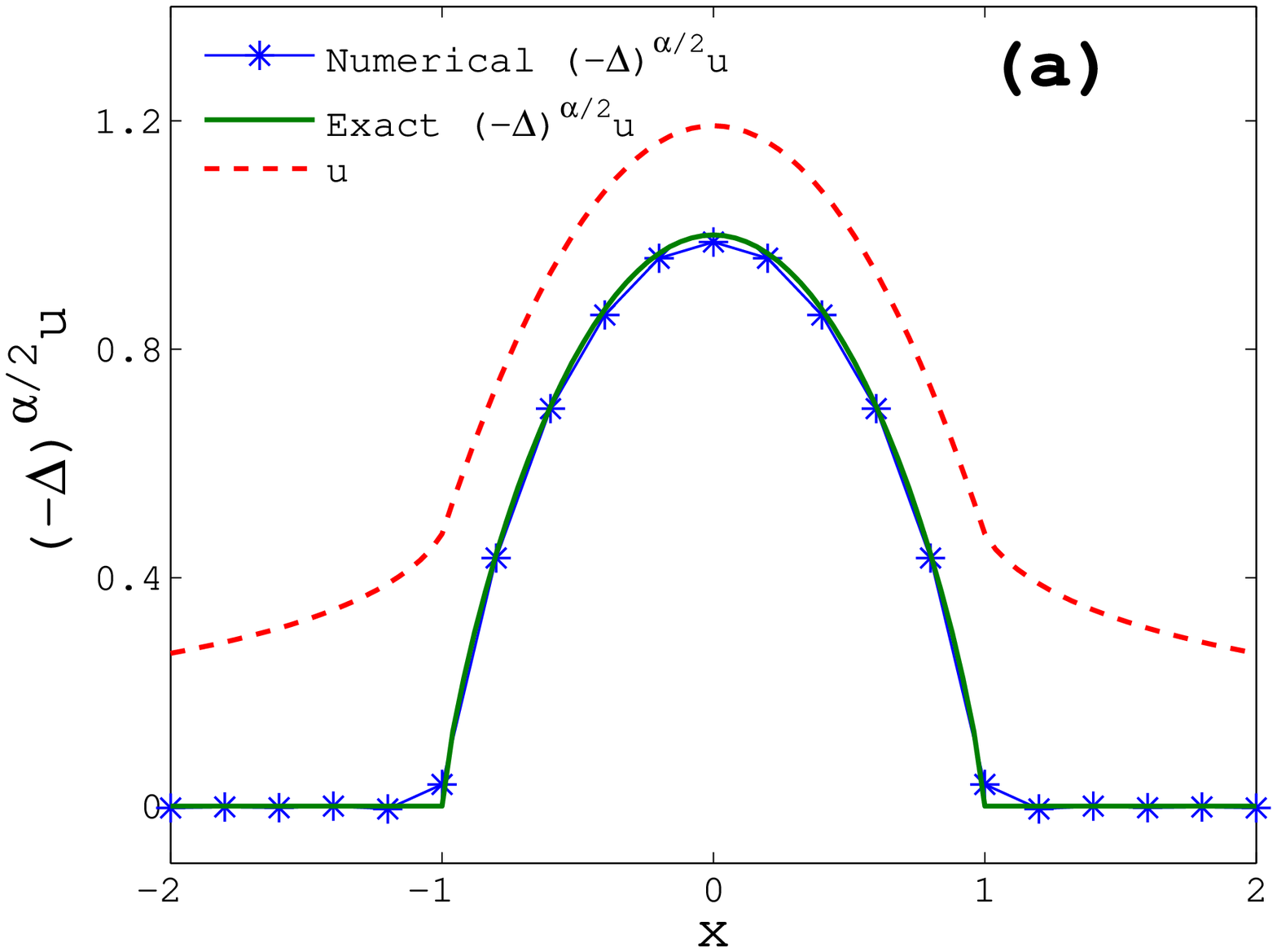}
$~~~~$
  \includegraphics[totalheight=0.26\textheight]{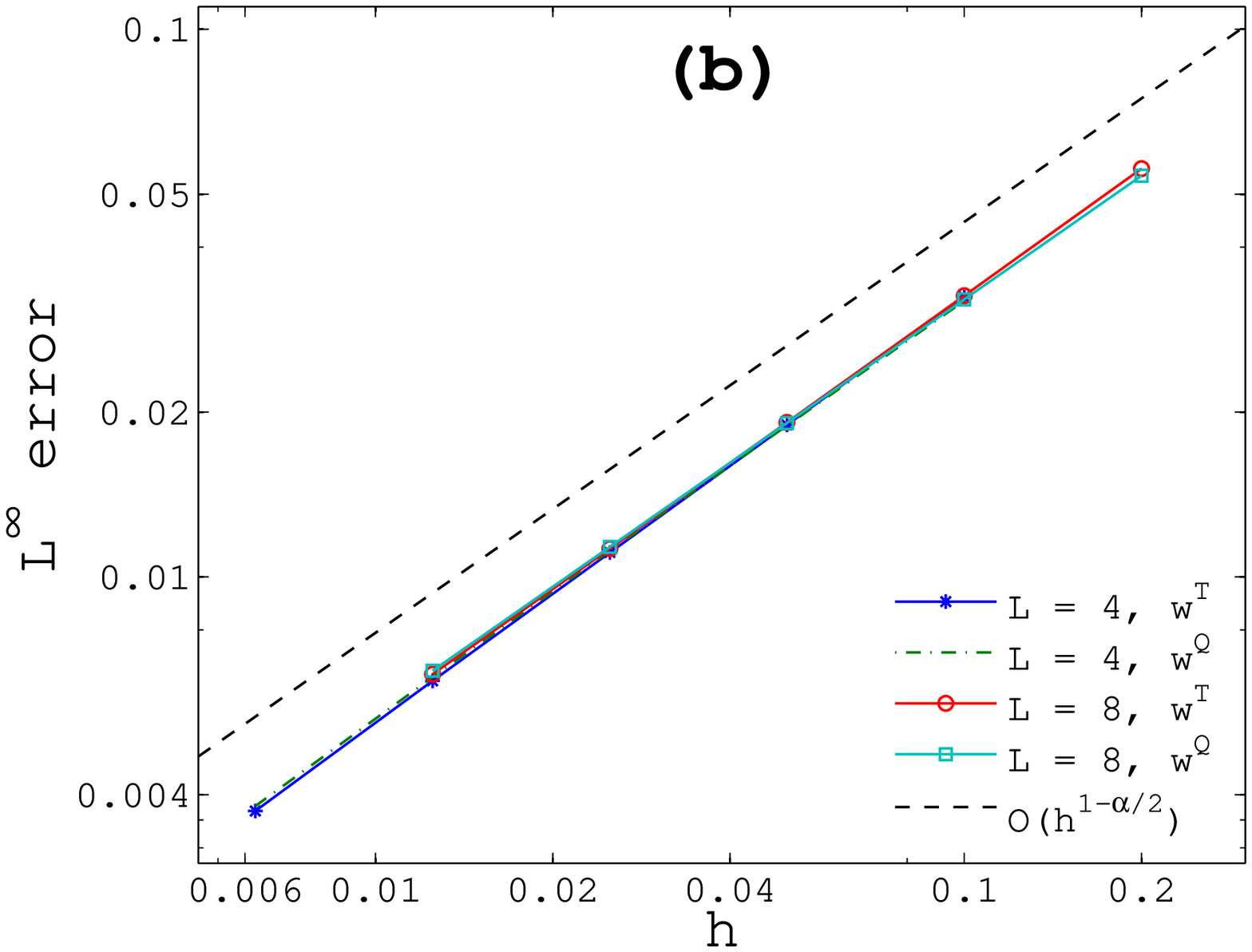}  
 \end{center}
\caption{
{\bf (a)} The fractional Laplacian of $u$ (dashed line) defined in~\eqref{eq:explobs} with $L=2$, $\al=0.4$ and $h=0.2$. {\bf (b)}
The $L^\infty$ error of $(-\Delta_h)^{\alpha/2}u$ for different grid sizes $h$, computed on $[-4,4]$ and $[-8,8]$. }
\label{fig:obconvtest}
\end{figure}

\begin{figure}[htp]
 \begin{center}
\includegraphics[totalheight=0.26\textheight]{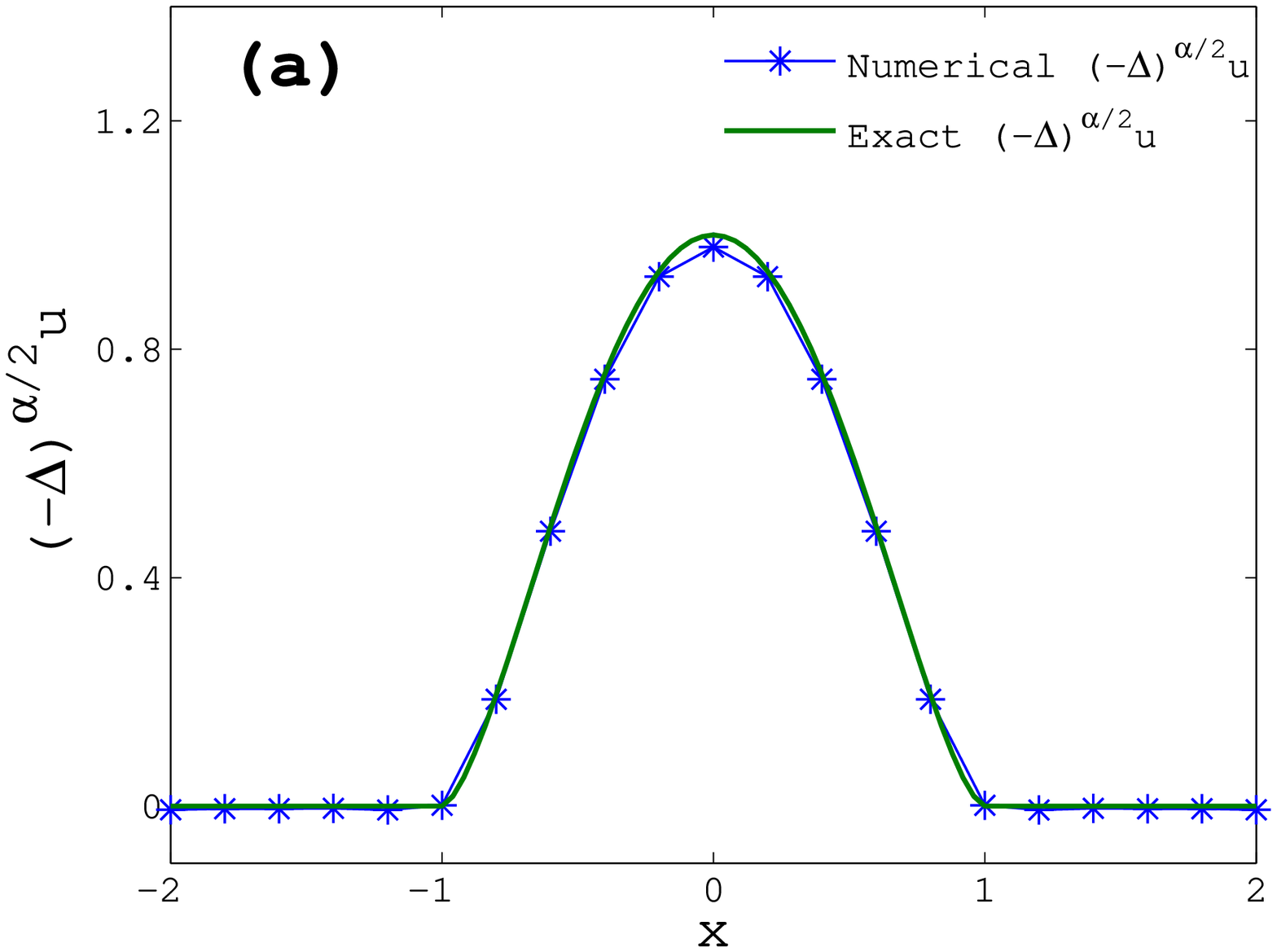}
$~~~~$
  \includegraphics[totalheight=0.26\textheight]{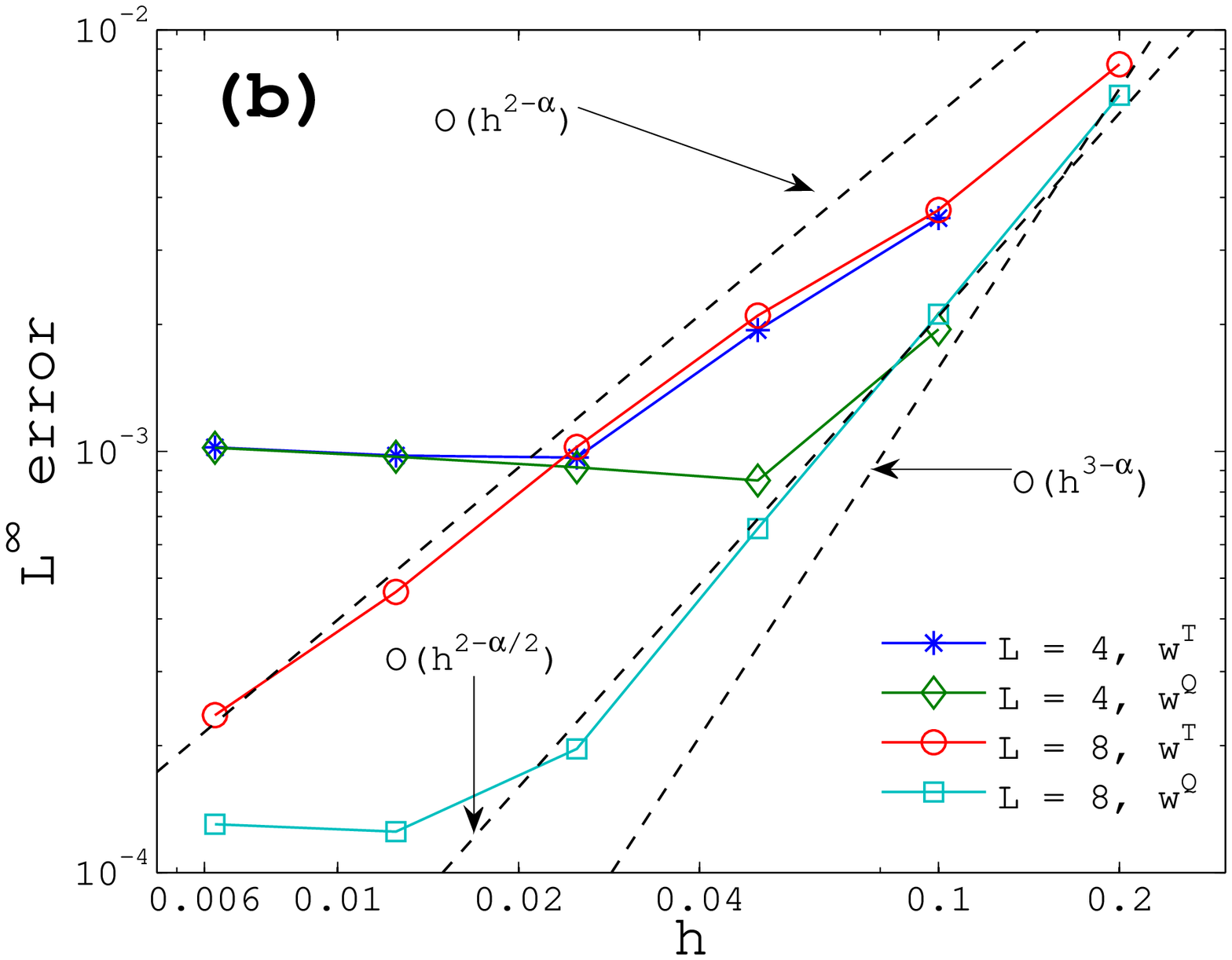}  
 \end{center}
\caption{
{\bf (a)} The fractional Laplacian of $u$ defined in~\eqref{eq:explobs1} with $L=2$, $\al=0.4$ and $h=0.2$. {\bf (b)}
The $L^\infty$ error of $(-\Delta_h)^{\alpha/2}u$ with different grid sizes $h$ on computed on $[-4,4]$ and $[-8,8]$. }
\label{fig:obconvtest1}
\end{figure}

\subsection{Extended Dirichlet problem} 
Next we solve the extended Dirichlet problem 
\begin{equation}\label{eq:GetoorDP}
 (-\Delta)^{\alpha/2} u=1\quad \mbox{on}\ (-1,1),\qquad\quad  u=0 \
\mbox{  on } (-\infty,-1]\cup [1,\infty).
\end{equation}
The exact solution is the probability density function of the expected first exit time of the 
symmetric $\alpha$-stable L\'{e}vy process from the interval $[-1,1]$. 
and is given by~\cite{MR0137148} 
\begin{equation}\label{eq:GetoorExtSol}
 u(x) = \frac{2^{-\alpha}\Gamma(\frac{1}{2})}{
\Gamma(1+\frac{\alpha}{2})\Gamma(\frac{1+\alpha}{2})}
(1-x^2)_+^{\frac{\alpha}{2}}.
\end{equation}
In this case, the solution is $C^{0,\al/2}([-1,1])$.
Despite the singularity, convergence
to the exact solution is observed. Because of the singularity
of the solution at the boundaries $x=\pm 1$, the numerical solutions using both weights $w^Q$ and 
$w^T$ converge at the same rate: $O(h^{\alpha/2})$, as shown 
in Figure~\ref{fig:GetoorSol}(b).   The numerical solutions 
(we set  $L=1$) for two grid sizes,  $h=0.20$, and $h=0.05$,
are shown in Figure~\ref{fig:GetoorSol}(a).   
(In this case, the solution was obtained by directly solving a (non-sparse) linear equation, rather than by iteration).
The resulting linear system is strictly diagonally dominant and is therefore well-conditioned.

\begin{figure}[htp]
 \begin{center}
  \includegraphics[totalheight=0.26\textheight]{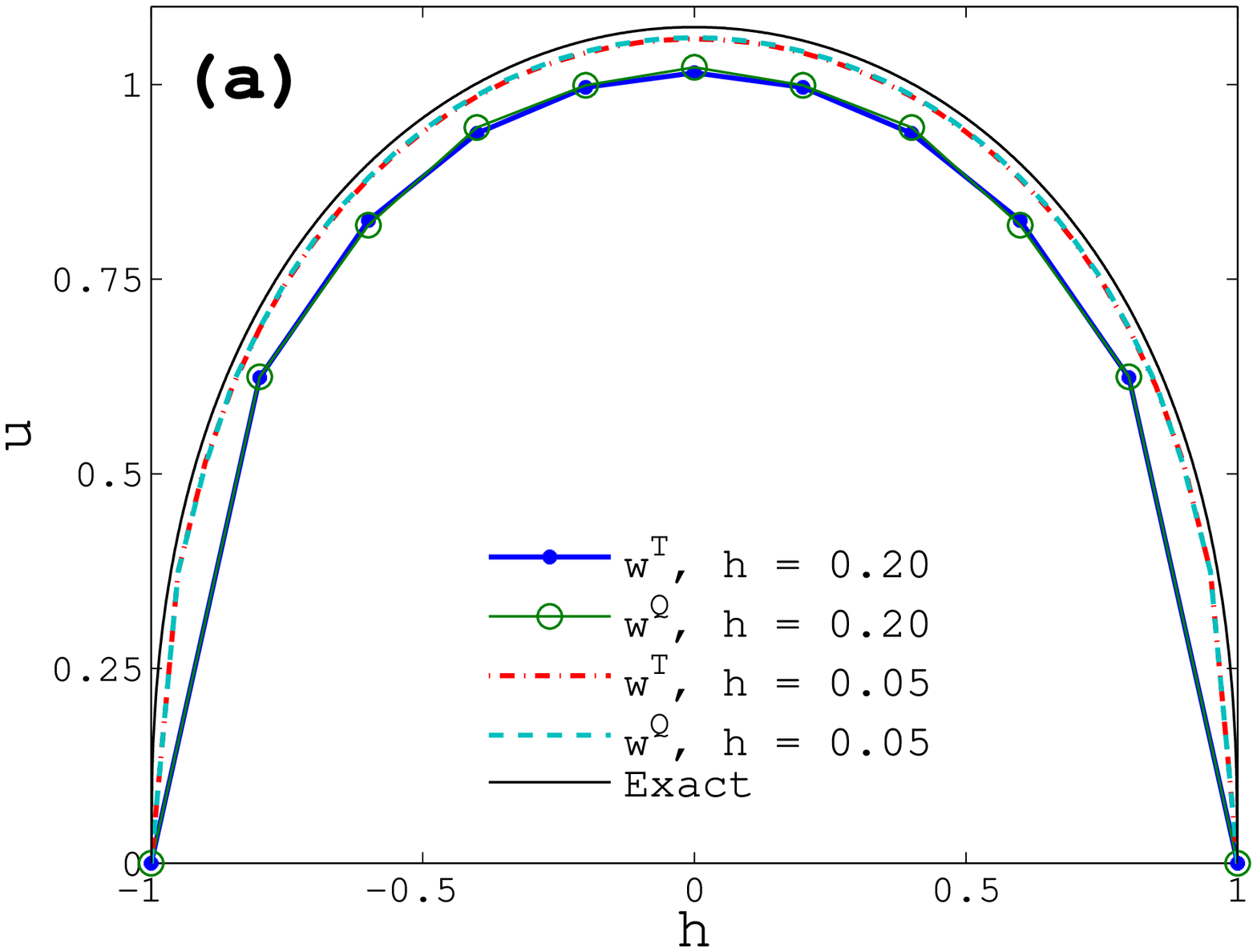}
$~~~$
  \includegraphics[totalheight=0.26\textheight]{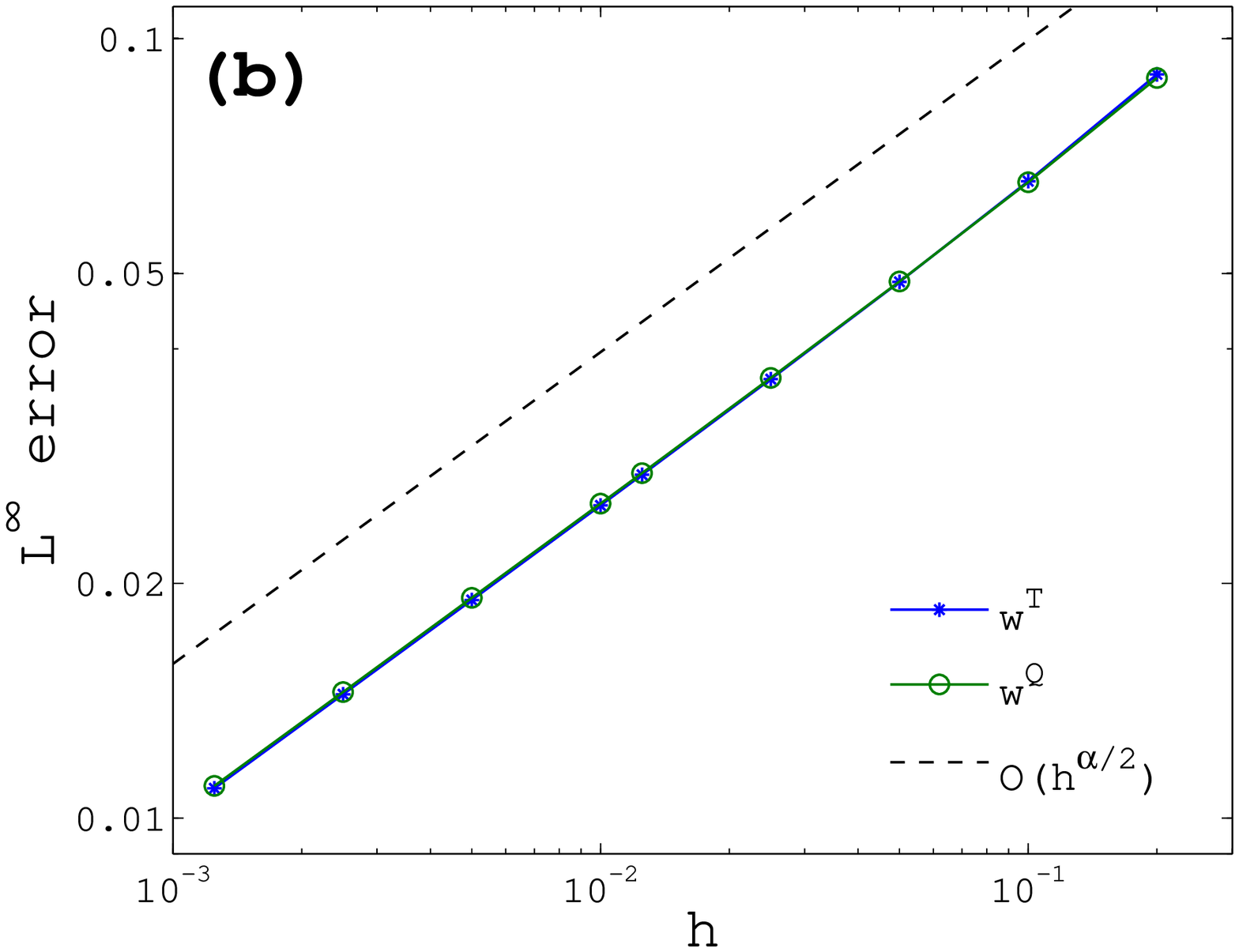}
 \end{center}
\caption{{\bf (a)} The numerical solution for the extended 
Dirichlet problem~\eqref{eq:GetoorDP} with parameters: $\alpha=0.8$,$L=1$,
$h=0.20$ or $0.05$; {\bf (b)} The convergence (in $h$) of the solution to the 
exact one~\eqref{eq:GetoorExtSol} for the weights $w^T$ and $w^Q$.}
\label{fig:GetoorSol}
\end{figure}

\subsection{The fractional obstacle problem}

The fractional obstacle problem~\cite{MR2367025,MR2244602,silvestre2007regularity} is a direct generalization of its 
elliptic counterpart: given a continuous function
$\varphi$ (the obstacle), consider the problem of determining a continuous function $u$ satisfying 
\begin{align}\label{eq:obstacle}
 u &\geq \varphi, && \mbox{in }\mathbb{R}^n,\cr
\FL  u &\geq 0,&& \mbox{in }\mathbb{R}^n, \\
\hskip 1in \FL u(x) &=0,&& \mbox{on } \{x\in\mathbb{R}^n \mid u(x)>\varphi(x)\}. 
\hskip 1in
\notag
\end{align}
The solution, $u(x)$, approaches zero as $|x| \to \infty$, which requires the obstacle function $\varphi$ 
to be  compactly supported or rapidly decaying.  

This problem arises  in the study of the long-time asymptotic behavior
of a fractional porous medium equation~\cite{MR2847534,MR2773189}.
Similar obstacle problems arise in financial mathematics as a pricing 
model~\cite{cont2004financial}.  

The equations~\eqref{eq:obstacle} can be written in the equivalent form
\[
 \min(u-\varphi,\FL u)=0,
\]
or as a steady state of the evolution equation 
\[
 u_t+\min(u-\varphi,\FL u)=0.
\]
The latter suggests the following iterative scheme
\begin{equation}\label{eq:obscheme}
 u^{k+1}_j =u^{k}_j- \Delta t \min\Big(u_j^k-\varphi_j,[\FLh u^k]_j\Big).
\end{equation}
When $\Delta t \leq \min\Big(1,\sum_{j\neq 0} w_j\Big)$, the discrete evolution operator $\mathcal{L}[u]$ is monotone in $u$, and the iterative method is a contraction in the maximum norm~\cite{ObermanSINUM}.
As a result, the extension to nonlinear elliptic operators involving the fractional Laplacian can be performed~\cite{biswas2010difference}, using the theory of viscosity solutions for these operators~\cite{alvarez1996viscosity}.   This allows us to conclude that the  scheme converges to the unique viscosity solution of the equation.
Moreover, if we start with the initial condition $u^0_j = \varphi_j$, then
$u^k_j$ is increasing in $k$ for each $j$ and converges to $u^\infty_j$ from below.

In one dimension, when the obstacle is $\varphi(x)=
2^{-\al}\pi^{-1/2} \Gamma\left(\frac{1-\al}{2}\right)\Gamma\left(\frac{4-\al}{2}\right)
\Big(1-(1-\al)x^2\Big)_+, \al\in (0,1)$, 
one can check that the exact solution $u$ is given by~\eqref{eq:explobs}, 
with the coincident set $[-1,1]$ (on which $u=\phi$).

The convergence of the scheme~\eqref{eq:obscheme} with $\al=0.5$ and $L=4$ is shown in Figure~\ref{fig:obsc}, for both the solution $u$ and the fractional Laplacian $(-\Delta)^{\al/2}u$. 
The $L^\infty$ errors on different domain sizes $L$ and grid sizes are shown in Figure~\ref{fig:obserr}, which is shown to be $O(h^{1+\frac{\alpha}{2}})$
for the solution, $u$, and $O(h^{1-\frac{\alpha}{2}})$
for the fractional Laplacian, $(-\Delta)^{\alpha/2}u$.
Note that the accuracy of the solution is better than the accuracy of the operator.
As in the example of  Figure~\ref{fig:convtest}, for fixed 
domain size $L$, the error becomes saturated as $h\to 0$.

\begin{figure}[htp]
 \begin{center}
  \includegraphics[totalheight=0.26\textheight]{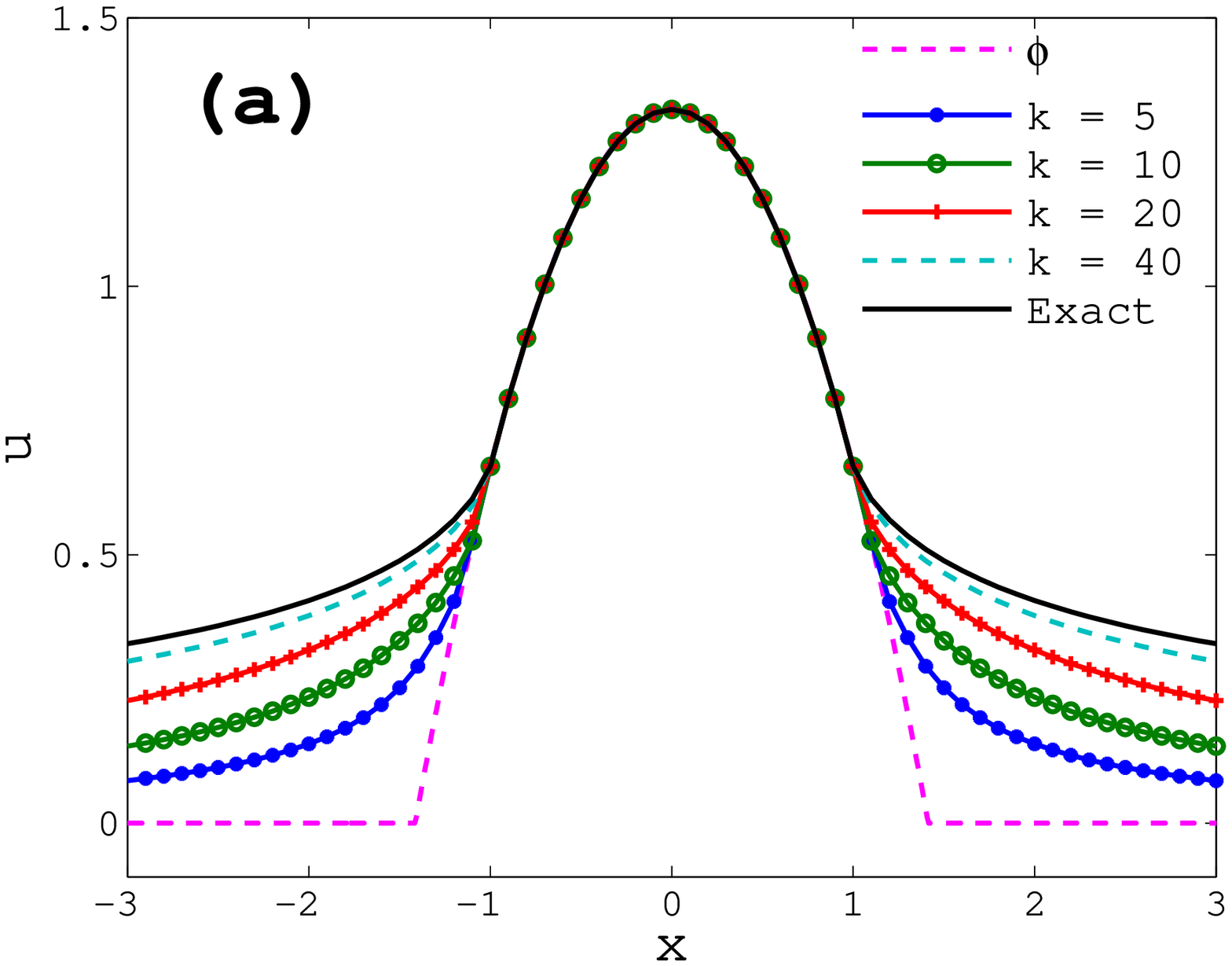}$~\qquad~$
\includegraphics[totalheight=0.26\textheight]{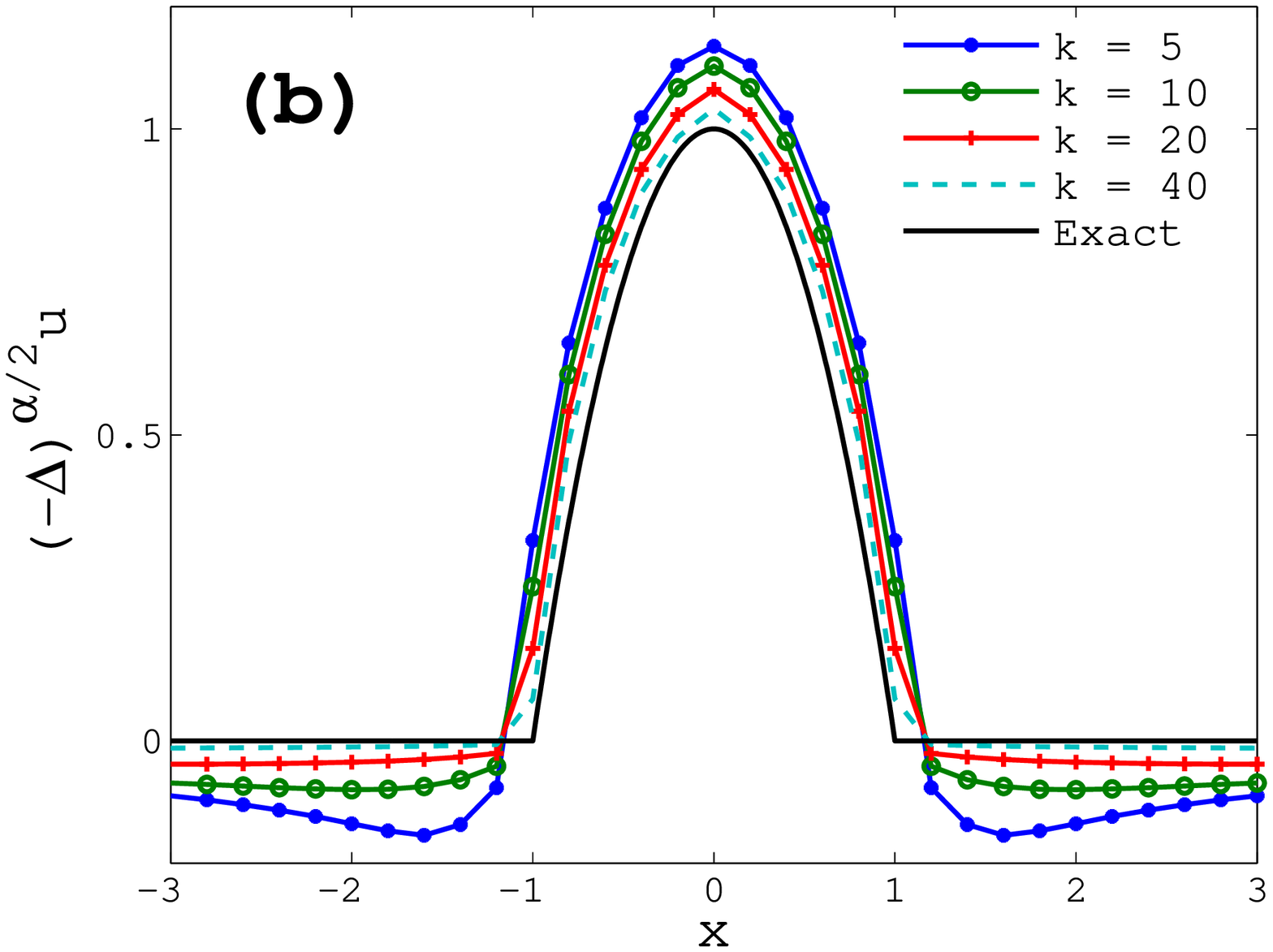}
 \end{center}
\caption{The convergence of the iterative scheme~\eqref{eq:obscheme} 
for the obstacle problem: {\bf (a)} comparing the numerical solution to the exact solution (given in \eqref{eq:explobs}) {\bf (b)} 
 the discrete fractional Laplacian $(-\Delta_h)^{\al/2}u_i$ to $(1-x^2)^{1-\alpha/2}_+$. The parameters are $\al=0.5$, $L=4$, $h=0.1$ and $\Delta t  = 0.158= h^\al/2$. }
\label{fig:obsc}
\end{figure}

\begin{figure}[htp]
 \begin{center}
\includegraphics[totalheight=0.26\textheight]{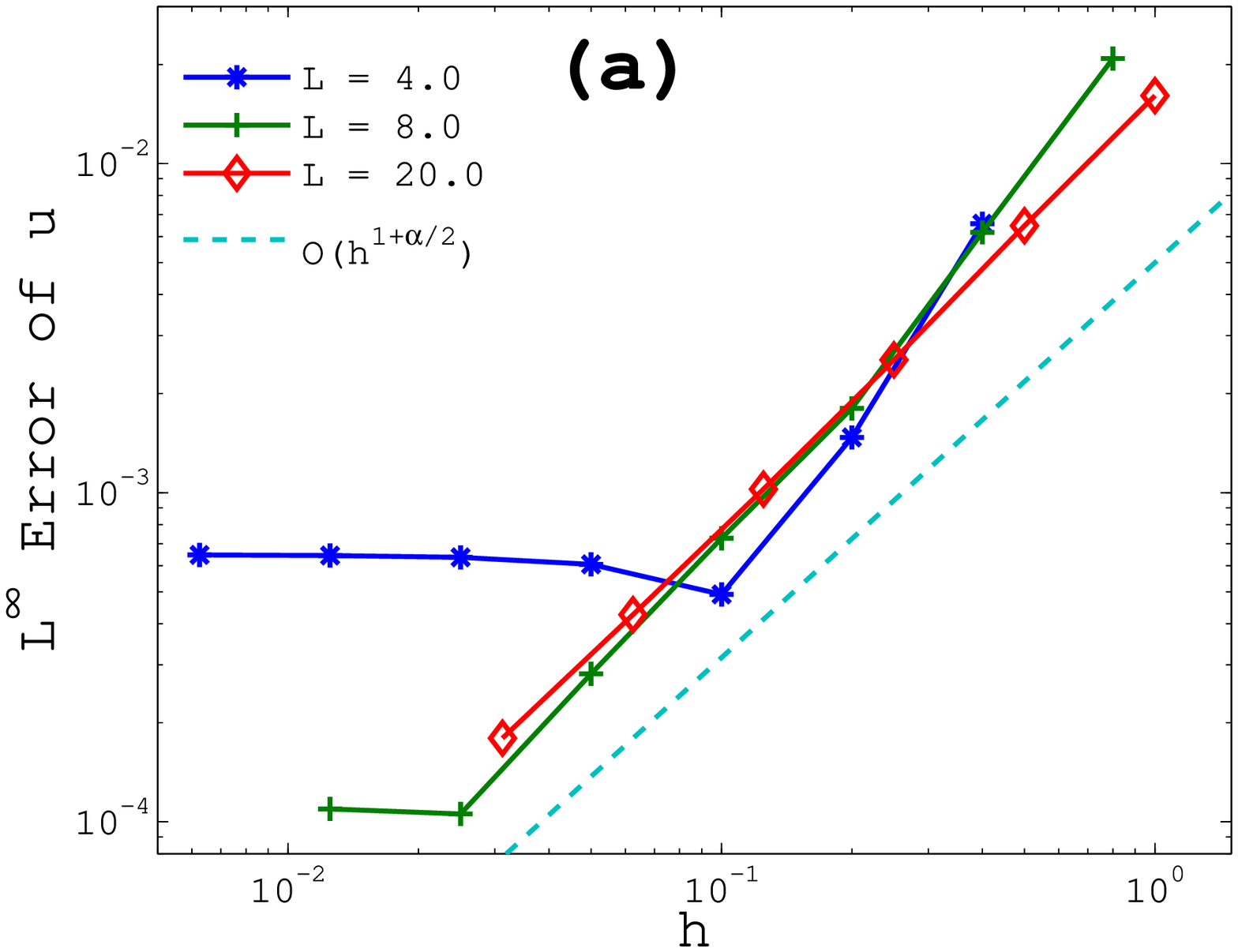}
$~\qquad~$
\includegraphics[totalheight=0.26\textheight]{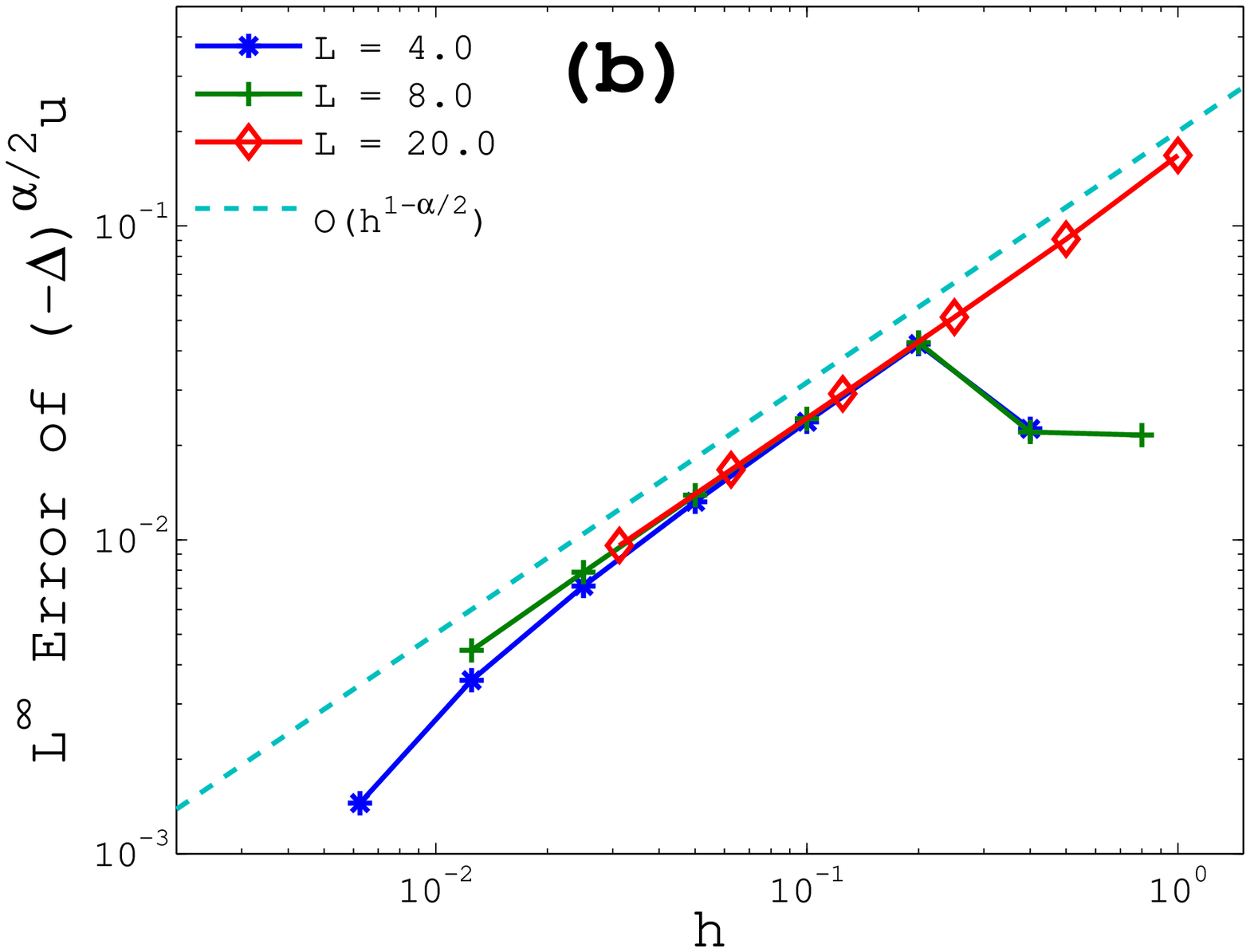}
 \end{center}
\caption{The convergence of the obstacle problem in $L^\infty$ norm
with different domain sizes $L$ and grid sizes $h$.  
{\bf (a)} the error in the solution $u$,  {\bf (b)} the error in the fractional Laplacian $(-\Delta_h)^{\al/2}u$.}
\label{fig:obserr}
\end{figure}

\section{Conclusions} 
We used the singular integral representation of the operator~\eqref{eq:rieszfrac} to derive a finite difference/quadrature discretization of the fractional Laplacian in one dimension.   
The weights in the discrete scheme~\eqref{FLh} are obtained from 
approximations of~\eqref{eq:rieszfrac} by splitting it into two parts: one coming from the singular part of the integral, the other from the tail of the integral. 

Two different sets of weights were obtained using semi-exact quadrature in the tail of the integral.  The weights $w^T$ come from using linear interpolation functions, and the weights  $w^Q$ come from quadratic interpolation functions. 
The formal accuracy of the two schemes was found to be $O(h^{2-\alpha})$ for $w^T$ and $O(h^{3-\alpha})$ for $w^Q$.   (The weights $w^T$ were included mainly because they are simpler to derive).
 
The discrete scheme obtained from the weights shares important properties with the continuous singular integral operator.  The weights are positive, and they scale as expected in the parameter $h$ and in the distance $x_i = hi$. 
The scheme is consistent  in the limit  $\alpha \to 2^-$, in the sense that it recovers the standard centered finite different approximation for the Laplacian in one dimension.
 A formula for the sum of the weights was given, which provides a CFL condition for stability of the explicit (e.g.\ forward Euler) discretizations of time dependent problem involving the operator.

A practical issue is that the operator is posed on the entire line, and truncation of the computational domain can dominate the error.  For solutions which decay only algebraically, this requires the use of large computational domains.
We developed an approximation for the truncated operator on finite domain,  using asymptotic values of the extended Dirichlet data.  This led to an improvement of the accuracy by  an order of magnitude.

We measured the accuracy of the operator using known solutions.  Smooth solutions to validated the predicted convergence rates.  Several exact solutions were computed: exponentially decaying solutions could be computed on small domains.  Algebraically decaying solution required larger domains, even with the approximation of the far field boundary condition.  We also computed the singular Getoor solution, both on the line and for the extended Dirichlet problem on the interval.  Despite the singularity, the numerical solutions converge, but with a slower rate, $O(h^{\alpha/2})$.

Once equipped with a consistent, stable  scheme with positive weights for the Fractional Laplacian, we can compute viscosity solutions of nonlinear elliptic and parabolic PDEs involving the operator.  We computed solutions of the obstacle problem using a simple iterative method.  Solutions were obtained, and the computed convergence rate was $O(h^{1+\frac{\alpha}{2}})$.
  
Convergence of solutions to the extended Dirichlet problem was proved in Theorem~\ref{thm1}.  For  smooth solutions, the error (in the maximum norm)  of the solution is bounded by the truncation error.    
It is an open problem to build numerical methods which obtain higher accuracy for singular solutions.
The theory of  nonlocal viscosity solutions~\cite{alvarez1996viscosity}  can be applied to study weak solutions of the fractional laplacian (while theory was developed for the nonlinear case, is also applies to weak solutions in the linear case).   Using this theory, convergence rates for 
finite difference quadrature schemes for weak solutions can be obtained~\cite{biswas2008error}: in this case, the convergence rate is fractional with an unknown exponent, consistent with the results we obtained here.   The viscosity solutions approach has not been used to obtain the higher order rates we established here for the special case of smooth solutions, since it applies to weak solutions.  
Conversely, the analogue of Theorem~\ref{thm1} could be proposed for weak solutions: however it is an open problem how to make these  techniques rigorous in for singular solutions

We compared our method with the method from  \cite{cont2005finite}, which was found to be less accurate than using $w^Q$ (but similar to using $w^T$.).  
In the supplementary materials, excerpted from~\cite{HuangOberman2}, an extensive comparison is performed with other numerical methods, which can also  be written in the form~\eqref{FLh}.   In particular, we make comparisons with schemes using fractional derivatives and Fourier based methods.  The scheme with quadratic weights is the most accurate of the stable schemes.  (The spectral method can be very accurate, but it is  unstable for $\al > 1$.)

Extending this work to higher dimensions still remains a challenge.  The simplest part of the extension would be the treatment of the singularity in the integral: this extends naturally (using polar or spherical coordinates) to higher dimensions.    The exact numerical quadrature can be performed in higher dimensions, although it becomes more complicated.  However, the treatment of the tail of the operator does not extend.  In the special case of operators with truncated tails, or for exponentially decaying  extended boundary conditions, the extension of this work is possible.

\bibliography{biofraclap}
\bibliographystyle{alpha}
\end{document}